\newtheorem{theorem}{Theorem}
\newtheorem{lemma}{Lemma}
\newcommand{\Hmm}[1]{\leavevmode{\marginpar{\tiny%
$\hbox to 0mm{\hspace*{-0.5mm}$\leftarrow$\hss}%
\vcenter{\vrule depth 0.1mm height 0.1mm width \the\marginparwidth}%
\hbox to 0mm{\hss$\rightarrow$\hspace*{-0.5mm}}$\\\relax\raggedright #1}}}
\newcommand{\nc}{\newcommand}
\nc{\les}{\lesssim}
\nc{\ges}{\gtrsim}
\nc{\nit}{\noindent}
\nc{\nn}{\nonumber}
\nc{\D}{\partial}
\nc{\diff}[2]{\frac{d #1}{d #2}}
\nc{\diffn}[3]{\frac{d^{#3} #1}{d {#2}^{#3}}}
\nc{\pdiff}[2]{\frac{\partial #1}{\partial #2}}
\nc{\pdiffn}[3]{\frac{\partial^{#3} #1}{\partial{#2}^{#3}}}
\nc{\abs}[1] {\lvert #1 \rvert}
\nc{\cAc}{{\cal A}_c}
\nc{\cE}{{\cal E}}
\nc{\cF}{{\mathcal F}}
\nc{\cP}{{\cal P}}
\nc{\cV}{{\cal V}}
\nc{\cQ}{{\cal Q}}
\nc{\cGin}{{\cal G}_{\rm in}}
\nc{\cGout}{{\cal G}_{\rm out}}
\nc{\cO}{{\cal O}}
\nc{\Lav}{{\cal L}_{\rm av}}
\nc{\cL}{{\cal L}}
\nc{\cB}{{\cal B}}
\nc{\cZ}{{\cal Z}}
\nc{\cR}{{\cal R}}
\nc{\cT}{{\cal T}}
\nc{\cY}{{\cal Y}}
\nc{\cX}{{\cal X}}
\nc{\cXT}{{{\cal X}(T)}}
\nc{\cBT}{{{\cal B}(T)}}
\nc{\vD}{{\vec \mathcal{D}}}
\nc{\efield}{\mathcal{E}}
\nc{\vE}{{\vec \efield}}
\nc{\vB}{{\vec \mathcal{B}}}
\nc{\vH}{{\vec \mathcal{H}}}
\nc{\ty}{{\tilde y}}
\nc{\tu}{{\tilde u}}
\nc{\tV}{{\tilde V}}
\nc{\Pc}{{\bf P_c}}
\nc{\bx}{{\bf x}}
\nc{\bX}{{\bf X}}
\nc{\bXYZ}{{\bf XYZ}}
\nc{\bY}{{\bf Y}}
\nc{\bF}{{\bf F}}
\nc{\bS}{{\bf S}}
\nc{\dV}{{\delta V}}
\nc{\dE}{{\delta E}}
\nc{\TT}{{\Theta}}
\nc{\dPsi}{{\delta\Psi}}
\nc{\order}{{\cal O}}
\nc{\Rout}{R_{\rm out}}
\nc{\eplus}{e_+}
\nc{\eminus}{e_-}
\nc{\epm}{e_\pm}
\nc{\eps}{\varepsilon}
\nc{\vnabla}{{\vec\nabla}}
\nc{\G}{\Gamma}
\nc{\w}{\omega}
\nc{\mh}{h}
\nc{\mg}{g}
\nc{\vphi}{\varphi}
\nc{\tlambda}{\tilde\lambda}
\nc{\be}{\begin{equation}}
\nc{\ee}{\end{equation}}
\nc{\ba}{\begin{eqnarray}}
\nc{\ea}{\end{eqnarray}}
\nc{\g}{\gamma}
\nc{\ol}{\overline}
\newtheorem{prop}[theorem]{Proposition}
\newtheorem{defin}[theorem]{Definition}
\def\R{\mathbb R}
\nc{\T}{\mathbb T}
\nc{\Z}{\mathbb Z}
\nc{\N}{\mathbb N}
\nc{\pt}{\partial_t}
\nc{\la}{\langle}
\nc{\ra}{\rangle}
\nc{\infint}{\int_{-\infty}^{\infty}}
\nc{\halfwidth}{6.5cm}
\nc{\figwidth}{10cm}
\nc{\nlayers}{L} \nc{\nsectors}{M}
\nc{\indicator}{\mathbf{1}}
\nc{\Rhole}{R_{\rm hole}}
\nc{\Rring}{R_{\rm ring}}
\nc{\neff}{n_{\rm eff}}
\nc{\Frem}{F_{\rm rem}}
\nc{\DD}{\Delta}
\nc{\cD}{\mathcal D}
\nc{\lnorm}{\left\|}
\nc{\rnorm}{\right\|}
\nc{\rnormp}{\right\|_{\ell^{p,\eps}}}
\nc{\rar}{\rightarrow}
\nc{\sgn}{{\rm sign}}
\nc{\non}{\nonumber}
\nc{\wh}{\widehat}
\date{\today}
\begin{document}

\title[Fractional Schr\"odinger equation on torus]{Existence and Uniqueness theory for the fractional Schr\"odinger equation on the torus}

\author[Demirbas, Erdo\u{g}an, Tzirakis]{S. Demirbas, M. B. Erdo\u{g}an, and N. Tzirakis}
\thanks{The first two authors are partially supported by NSF grant  DMS-1201872.}

\address{Department of Mathematics \\
University of Illinois \\
Urbana, IL 61801, U.S.A.}

\email{demirba2@illinois.edu}

\address{Department of Mathematics \\
University of Illinois \\
Urbana, IL 61801, U.S.A.}
\email{berdogan@math.uiuc.edu}

\address{Department of Mathematics \\
University of Illinois \\
Urbana, IL 61801, U.S.A.}
\email{tzirakis@math.uiuc.edu}

\begin{abstract}

We study the Cauchy problem for the $1$-d periodic fractional Schr\"odinger equation with cubic nonlinearity. In particular we prove local well-posedness in Sobolev spaces, for solutions evolving from rough initial data. In addition we show the existence of global-in-time infinite energy solutions. Our tools include a new Strichartz estimate on the torus along with ideas that Bourgain developed in studying the periodic cubic NLS.

\end{abstract}

\maketitle

\section{Introduction}

In this paper we study a fractional semilinear Schr\"odinger type equation with periodic boundary conditions, 

\begin{equation}\label{sch}
\left\{
\begin{array}{l}
iu_{t}+(-\Delta)^{\alpha} u =\pm |u|^2u, \,\,\,\,  x \in {\mathbb{T}}, \,\,\,\,  t\in \mathbb{R} ,\\
u(x,0)=u_0(x)\in H^{s}(\mathbb{T}), \\
\end{array}
\right.
\end{equation}
where $\alpha \in (1/2,1)$. The equation is called defocusing when the sign in front of the nonlinearity is a minus and focusing when the sign is a plus.
\vskip 0.05 in
\noindent
Posed on the real line the equation has appeared at a formal level in many recent articles, see \cite{kay} and the references therein. For example it is a basic model equation in the theory of fractional quantum mechanics introduced by Laskin, \cite{laskin}. A rigorous derivation of the equation can be found in \cite{kay} starting from a family of models describing charge transport in
bio polymers like the DNA. The starting point is a discrete
nonlinear Schr\"odinger equation with general lattice interactions. Equation \eqref{sch} with $\alpha\in(\frac12,1)$ appears as the continuum limit of the long-range interactions between quantum particles on the lattice. Whereas, allowing only the short-range interactions (e.g. neighboring particle interactions) the authors obtain the standard Schr\"odinger equation ($\alpha=1$) which is completely integrable, see \cite{integ}.

In this work we study the periodic problem mainly for two reasons. First due to the lack of strong dispersion the mathematical theory for the fractional Schr\"odinger equations are less developed than the cubic nonlinear Schr\"odinger equation (NLS). Secondly when we consider periodic boundary conditions the analysis becomes harder, for any dispersion relation, since the dispersive character of the equation can only be exploited after employing averaging arguments and a careful analysis of the resonant set of frequencies, \cite {et}.

The local and global well-posedness for the periodic NLS was established by Bourgain in \cite{bou}. He used number theoretic arguments to obtain periodic Strichartz estimates along with a new scale of spaces adapted to the dispersive relation of the linear group. More precisely he proved the existence and uniqueness of local-in-time strong $L^2(\T)$ solutions. Since it is known that smooth solutions of the NLS satisfy mass conservation 
$$M(u)(t)=\int_{\mathbb{T}}|u(t,x)|^2=M(u)(0),$$
Bourgain's result showed the existence of global-in-time strong $L^2(\T)$ solutions in the focusing and defocusing case. The $L^2$ theorem of Bourgain is sharp since as it was shown in \cite{bgt}, the solution operator is not uniformly continuous on $H^s(\Bbb T)$ for $s<0$. 

The local  well-posedness for the  fractional NLS on the real line was recently studied in \cite{kwon}. The authors showed that the equation is locally well-posed in $H^s(\R)$, for $s\geq \frac{1-\alpha}{2}$. They also proved that the solution operator fails to be uniformly continuous in time for $s<\frac{1-\alpha}{2}$. Since the periodic case is less dispersive, we expect the range $s\geq  \frac{1-\alpha}{2}$ to be the optimal range for the local theory also in the periodic case.

In this paper we obtain the following results for the fractional NLS. We first establish a Strichartz estimate that reads as follows
$$\|e^{it(-\Delta)^{\alpha}}f\|_{L^4_{t\in\T}L^4_{x\in\T}}\les \|f\|_{H^s(\T)},$$
for $s> \frac{1-\alpha}{4}$.  
To use this estimate and prove local well-posedness of the equation one has to overcome the derivative loss on the right hand side of the inequality. In principle this can be done by the method in \cite{catwang} and  \cite{sec} which gives local well-posedness in the $H^s(\T)$ level, for $s> \frac{1-\alpha}{2} $. However, since the proof  in \cite{catwang} and \cite{sec} is quite involved, we 
choose to  establish the  local theory  by obtaining  trilinear $X^{s,b}$ estimates directly. Then a standard iteration finishes the proof without any further analysis. We remark that for classical solutions in $H^{s}(\T)$, $s>\frac{1}{2}$, local theory in the space $C([0,T];H^s(\T))$ is known. The proof is the same both on the real line and on the torus and it is based on the Banach algebra property of the Sobolev spaces for $s>\frac{1}{2}$. Moreover the length of the local interval of existence is lower bounded by $\frac{1}{\|u_0\|_{H^s(\T)}^2}$. To lower the regularity of the local existence theory and to prove the smoothing estimate of section 5 we have to reprove the local theory in the $X^{s,b}$ spaces. In this case the solution is controlled on the larger $X^{s,b}$ norm, since $X_T^{s,b}\in C([0,T];H^s(\T))$ for any $b>\frac{1}{2}$, and thus the length of the interval of existence is smaller. In our case for $s>\frac{1}{2}$ it is lower bounded by $\frac{1}{\|u_0\|_{H^s(\T)}^{4+}}$.

We note that in addition to the conservation of mass, smooth solutions of \eqref{sch} satisfy energy conservation  
$$E(u)(t)=\int_{\mathbb{T}}\big||\nabla|^{\alpha} u(t,x)\big|^2\pm \frac{1}{2}\int_{\mathbb{T}}\big|u(t,x)\big|^4=E(u)(0).$$
Note that local theory in $H^\alpha$ level along with the conservation of mass and energy imply the existence of global-in-time energy solutions. Since the equation is mass and energy sub-critical, \cite{tao}, one also obtains global solutions also in the focusing case. This follows from the Gagliardo-Nirenberg inequality
$$\|u\|_{L^4}^4\lesssim \||\nabla|^{\alpha}u\|_{L^2}^{\frac{1}{\alpha}}\|u\|_{L^2}^{4-\frac{1}{\alpha}}$$
which controls the potential energy via the kinetic energy $\||\nabla|^{\alpha}u\|_{L^2}$. One can then control the Sobolev norm of the solution for all times even in the focusing case  since $\frac{1}{\alpha}<2$. We omit the standard details.

In the second part of the paper we use the high-low frequency decomposition of Bourgain, \cite{highlow}, to prove global solutions below the energy level. Bourgain's
method consists of estimating separately the evolution of the low frequencies and of the high
frequencies of the initial data. The low frequency part is smooth and thus by conservation of energy globally defined. The difference equation which is high frequency has small norm. By using smoothing estimates this decomposition can be iterated as long as the norm of the nonlinear part is controlled by the initial energy of the smooth part. As a byproduct of the method one obtains
that the nonlinear part of the solution is actually smoother than the linear propagator and stays always in the energy space. Moreover the global solutions satisfy polynomial-in-time bounds. We summarize the results in the following two theorems:
\begin{theorem}\label{thm:LWP} For any $\alpha\in(\frac12,1)$, and any $b>\frac12$ sufficiently close to $\frac12$, the equation \eqref{sch} is locally well-posed  in the space $X^{s,b}_T \subset C([0,T]; H^s(\T))$  for any  $s>\frac{1-\alpha}2$, where $T=T(\|u_0\|_{H^s(\T)})$. Moreover, for $s>\frac12$ the local existence time   $T\gtrsim \|u_0\|_{H^s(\T)}^{-4-}$.
\end{theorem}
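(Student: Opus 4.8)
The plan is to run the standard Bourgain space iteration: reformulate \eqref{sch} as the fixed point equation
\[
u(t) = \eta(t) e^{it(-\Delta)^\alpha} u_0 \mp i \eta(t)\int_0^t e^{i(t-t')(-\Delta)^\alpha}\big(|u|^2u\big)(t')\,dt',
\]
where $\eta$ is a smooth time cutoff, and solve it by contraction in a ball of $X^{s,b}_T$. The linear estimates $\|\eta e^{it(-\Delta)^\alpha}u_0\|_{X^{s,b}}\lesssim\|u_0\|_{H^s}$ and the Duhamel estimate $\|\eta\int_0^t e^{i(t-t')(-\Delta)^\alpha}F\,dt'\|_{X^{s,b}_T}\lesssim T^{0+}\|F\|_{X^{s,b-1+}_T}$ are standard for any $b>\tfrac12$, with the crucial gain of a positive power of $T$ coming from choosing $b-1+\varepsilon$ slightly below the dual exponent. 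So everything reduces to the multilinear bound
\[
\big\| u_1\overline{u_2}u_3 \big\|_{X^{s,b-1+}_T} \lesssim \prod_{j=1}^3 \|u_j\|_{X^{s,b}_T},\qquad s>\frac{1-\alpha}{2},
\]
(with the obvious modification for $|u|^2u = u\bar u u$), which by duality and Plancherel becomes a weighted convolution estimate on $\Z\times\R$ for the frequency–modulation variables $(n_j,\tau_j)$ constrained by $n=n_1-n_2+n_3$, $\tau=\tau_1-\tau_2+\tau_3$.

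The heart of the matter is the resonance analysis. Writing $\langle\sigma_j\rangle = \langle\tau_j - n_j|n_j|^{\alpha-1}n_j\rangle$ — wait, more precisely $\langle\tau_j + |n_j|^{2\alpha}\rangle$ with signs according to conjugation — one uses the algebraic identity for the sum of the four dispersion relations. On the torus, for $\alpha=1$ this is Bourgain's classical computation where $|n|^2 - |n_1|^2 + |n_2|^2 - |n_3|^2$ factors nicely; for general $\alpha\in(1/2,1)$ the function $x\mapsto |x|^{2\alpha}$ is strictly convex, so one expects a lower bound of the form
\[
\big| |n|^{2\alpha} - |n_1|^{2\alpha} + |n_2|^{2\alpha} - |n_3|^{2\alpha} \big| \gtrsim |n_1-n_2|\,|n_2-n_3|\,\big(\max|n_j|\big)^{2\alpha-2}
\]
away from the resonant configurations $n_1=n_2$ or $n_2=n_3$ (so that $n\in\{n_1,n_3\}$). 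This mean-value-type estimate, combined with $\max\langle\sigma_j\rangle \gtrsim$ the left side, lets one pay for the derivative loss: in the non-resonant region one of the modulations is large, which yields the needed smoothing through the $\langle\sigma\rangle^{b-1+}$ weights and an application of Cauchy–Schwarz in the remaining summations after bounding $\sum_n \langle n\rangle^{-1-}<\infty$. In the resonant region the sum collapses (say $n_2=n_3$, forcing $n=n_1$ and leaving $\sum_{n_2}|\hat u_2(n_2)|\,|\hat u_3(n_2)|$ type expressions), and one handles it by the periodic $L^4$ Strichartz estimate stated in the introduction — or directly by $\ell^2$-based Cauchy–Schwarz — losing only $\langle n_2\rangle^{0+}$, which is absorbed by $s>0$.

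For the last assertion, that $T\gtrsim \|u_0\|_{H^s}^{-4-}$ when $s>\tfrac12$, I would instead use the elementary algebra-property argument: for $s>1/2$, $H^s(\T)$ is a Banach algebra, so $\||u|^2u\|_{H^s}\lesssim \|u\|_{H^s}^3$, and the $X^{s,b}_T$ trilinear estimate can be replaced by one that tracks the $T$-dependence explicitly, giving a contraction on a ball of radius $\sim\|u_0\|_{H^s}$ provided $T^{\theta}\|u_0\|_{H^s}^2 \lesssim 1$ for some $\theta = 1 - 2(b-\tfrac12) - = \tfrac12-$ that tends to a definite positive value as $b\to\tfrac12^+$; chasing the exponent through the Duhamel $T$-gain yields $T\gtrsim\|u_0\|_{H^s}^{-4-}$, the $4+$ (rather than the NLS-sharp $2$) being exactly the price of working in $X^{s,b}$ with $b>\tfrac12$ rather than in $C_tH^s_x$ directly.

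The step I expect to be the main obstacle is the convexity/mean-value lower bound on $\big||n|^{2\alpha}-|n_1|^{2\alpha}+|n_2|^{2\alpha}-|n_3|^{2\alpha}\big|$ and, more importantly, its effective use: unlike the $\alpha=1$ case there is no exact factorization, the power $2\alpha-2<0$ is negative so the lower bound degrades for large frequencies, and one must carefully case-split on which two of the $|n_i-n_j|$ differences are comparable to $\max|n_j|$ versus small. Getting the bookkeeping right so that the derivative loss $\langle n\rangle^{2s}$ on the left is fully compensated — using only $s>\tfrac{1-\alpha}{2}$ and not more — is the delicate part; everything else is the routine $X^{s,b}$ machinery.
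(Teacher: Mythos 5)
Your proposal follows essentially the same route as the paper: a contraction in $X^{s,b}_T$ reduced to a trilinear estimate whose proof hinges on exactly the convexity/mean--value lower bound $\big||n|^{2\alpha}-|n_1|^{2\alpha}+|n_2|^{2\alpha}-|n_3|^{2\alpha}\big|\gtrsim |n_1-n_2|\,|n_2-n_3|\,(\max|n_j|)^{2\alpha-2}$ that the paper isolates as Lemma~\ref{freq_est}, together with the same frequency case analysis and the same ``take $b'=0$ via the algebra property when $s>\frac12$'' mechanism that yields $T\gtrsim\|u_0\|_{H^s}^{-4-}$. The only organizational difference is that the paper first gauges away the mean-field resonance by $u\mapsto ue^{iPt}$ with $P=\frac1\pi\|u_0\|_2^2$ and separates the diagonal term $\rho(u)$, whereas you treat the resonant configurations in place; this changes nothing for the local theory.
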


\begin{theorem}\label{thm:GWP}
For any $\alpha\in(\frac12,1)$, the equation \eqref{sch} is globally well-posed in $H^s(\T)$  for any $s>\frac{10 \alpha+1}{12}$. Moreover,
$$u(t)-e^{it(-\Delta)^\alpha\pm iPt}u_0\in H^\alpha(\T)$$
for all times, where $P=\frac1\pi \|u_0\|_2^2$.
\end{theorem}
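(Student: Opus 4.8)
\emph{Proof proposal.} The plan is to carry out Bourgain's high--low frequency decomposition, with the smoothing estimate of Section~5 supplying the gain that lets the scheme be iterated. Two preliminary reductions. Since $\|u(t)\|_{L^2(\T)}$ is conserved, the resonant part of the cubic nonlinearity is, along the solution, exactly $\pm\tfrac1\pi\|u_0\|_{L^2}^2\,u=\pm Pu$; the gauge transform $v(t):=e^{\mp iPt}u(t)$, chosen to cancel it, therefore satisfies the Wick-ordered equation
\begin{equation}\label{wick}
iv_t+(-\Delta)^\alpha v=\pm\mathcal N(v),\qquad \mathcal N(v):=|v|^2v-\tfrac1\pi\|v\|_{L^2}^2 v,
\end{equation}
and, unlike $|v|^2v$, this nonlinearity admits a nontrivial smoothing estimate. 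Because $e^{\mp iPt}$ is a unitary Fourier multiplier commuting with $e^{it(-\Delta)^\alpha}$, Theorem~\ref{thm:LWP}, the conservation laws, and the energy-space global theory all pass to \eqref{wick}, and the theorem is equivalent to the statement that $v(t)-e^{it(-\Delta)^\alpha}v_0\in H^\alpha(\T)$ for all $t$, with $v_0=u_0$.

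Fix $T>0$ and a large cutoff $N$, and split $v_0=\phi_0+\psi_0$ with $\phi_0=P_{\le N}v_0$ and $\psi_0=P_{>N}v_0$. Then $\|\phi_0\|_{H^\alpha}\le N^{\alpha-s}\|v_0\|_{H^s}$, so by the Gagliardo--Nirenberg inequality of the introduction (and $\alpha>\tfrac12$) the Wick-ordered energy satisfies $E(\phi_0)\lesssim N^{2(\alpha-s)}$, while $\|\psi_0\|_{H^s}\to0$ as $N\to\infty$ and $\|\psi_0\|_{H^{s-\rho}}\le N^{-\rho}\|v_0\|_{H^s}$. Since $\phi_0$ lies in the energy space, the subcritical global theory in $H^\alpha$ (local well-posedness, conservation of mass and Wick-ordered energy, Gagliardo--Nirenberg in the focusing case since $\tfrac1\alpha<2$) produces a global evolution $\phi(t)$ with $\|\phi(t)\|_{H^\alpha}\lesssim N^{\alpha-s}$, hence $\|\phi(t)\|_{H^s}\lesssim N^{\alpha-s}$, for all $t\in[0,T]$. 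Writing $v=\phi+w$, the remainder solves
\begin{equation}\label{diff}
iw_t+(-\Delta)^\alpha w=\pm\big(\mathcal N(\phi+w)-\mathcal N(\phi)\big),\qquad w(0)=\psi_0,
\end{equation}
a trilinear equation whose nonlinearity carries at least one factor of $w$; by the trilinear $X^{s,b}$ estimates behind Theorem~\ref{thm:LWP} (with $b>\tfrac12$ close to $\tfrac12$) it is locally well-posed on $[0,\delta]$ with $\delta\gtrsim N^{-(4+)(\alpha-s)}$ (the local existence time of Theorem~\ref{thm:LWP} at size $\sim N^{\alpha-s}$) and $\|w\|_{X^{s,b}_{[0,\delta]}}\lesssim\|\psi_0\|_{H^s}$.

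The smoothing estimate of Section~5, applied to the Duhamel term of \eqref{diff} (which retains the $w$-factor), then places
$$R_0:=w(\delta)-e^{i\delta(-\Delta)^\alpha}\psi_0\ \in\ H^{s+a}\qquad\text{for every }a<a_0(s,\alpha),$$
with $\|R_0\|_{H^{s+a}}$ controlled by $\|\psi_0\|_{H^s}$ and $\|\phi\|_{X^{s,b}_\delta}\lesssim N^{\alpha-s}$. The role of the hypothesis $s>\tfrac{10\alpha+1}{12}$ is exactly to make $\alpha-s<a_0(s,\alpha)$, so one may take $a=\alpha-s$ and conclude $R_0\in H^\alpha$. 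One then restarts at $t=\delta$ with smooth datum $\phi_0^{(1)}:=\phi(\delta)+R_0\in H^\alpha$ and rough datum $\psi_0^{(1)}:=e^{i\delta(-\Delta)^\alpha}\psi_0$, which is still Fourier-supported in $|k|>N$ and has $\|\psi_0^{(1)}\|_{H^s}=\|\psi_0\|_{H^s}$ since free propagation is unitary, and iterates. Provided the Wick-ordered energy of the smooth part grows by at most $\sum_j\Delta E_j\lesssim E(\phi_0)$ over the $\lesssim TN^{(4+)(\alpha-s)}$ steps needed to reach $T$ (arranged by taking $N$ large; checking that this is compatible with the number of steps, rather than merely that $\|\psi_0\|_{H^s}\to0$, uses that the rough remnant $\psi_0^{(j)}$ is high-frequency, so its lower-regularity norms carry an extra $N^{-\rho}$, and is where the precise exponent $\tfrac{10\alpha+1}{12}$ enters), the step length stays comparable to $\delta$ and $\|v(t)\|_{H^s}\le\|\phi^{(j)}(t)\|_{H^s}+\|\psi_0\|_{H^s}\lesssim N^{\alpha-s}$ on $[0,T]$ --- this is global well-posedness, with the polynomial-in-time bound coming from how $E(\phi^{(j)})$ and the number of steps depend on $T$. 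Telescoping, $v(t)-e^{it(-\Delta)^\alpha}v_0$ equals $\phi^{(j)}(t)-e^{it(-\Delta)^\alpha}\phi_0$ plus free evolutions of $R_0,\dots,R_{j-1}$, all in $H^\alpha$; undoing the gauge gives the stated claim.

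I expect the real obstacle to be the smoothing estimate of Section~5 itself: one must show, presumably by a normal-form (differentiation-by-parts) reduction on the resonance relation $|k|^{2\alpha}-|k_1|^{2\alpha}+|k_2|^{2\alpha}-|k_3|^{2\alpha}$ together with the $L^4_{t,x}$ Strichartz bound stated above, that the nonlinear part of the evolution of \eqref{wick} is smoother than the linear propagator by a definite $a_0(s,\alpha)>0$ with $s+a_0\ge\alpha$ precisely in the range $s>\tfrac{10\alpha+1}{12}$; the non-quadratic dispersion $|k|^{2\alpha}$ makes the arithmetic and counting estimates for this resonant hypersurface considerably more delicate than in the classical case $\alpha=1$. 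The remaining, more routine, difficulty is the bookkeeping in the iteration: keeping the local existence time from degrading and summing the per-step energy increments against the number of steps, which is what ultimately pins down the exponent.
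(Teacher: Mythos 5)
Your overall scheme is the one the paper uses: gauge away the resonant part, split into a low-frequency evolution (globally controlled in $H^\alpha$ by conservation of the Hamiltonian) and a high-frequency difference equation, apply the smoothing estimate of Section 5 to the Duhamel part of the difference equation, absorb that remainder into the smooth part, and iterate on intervals of length $\delta\sim N^{-4(\alpha-s)-}$. But there is a genuine gap exactly where the exponent $\frac{10\alpha+1}{12}$ is decided. First, the mechanism you assign to the threshold is wrong: the condition $\alpha-s<a_0(s,\alpha)=\min(\alpha-\frac12,\,2s+\alpha-1)$ from Proposition~\ref{[smoothing]} is equivalent to $s>\frac12$, so ``taking $a=\alpha-s$'' is available for every $s>\frac12$ and cannot produce $\frac{10\alpha+1}{12}$. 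Second, the iteration as you normalize it does not close: if $w$ is measured only in $X^{s,b}_\delta$ with norm $\lesssim\|\psi_0\|_{H^s}\lesssim 1$, the smoothed remainder $R_0$ is merely $O(1)$ in $H^\alpha$, while the per-step energy increment is dominated by $\|R_0\|_{H^\alpha}\|v(t_1)\|_{H^\alpha}^3\sim N^{3(\alpha-s)}$ and there are $\sim TN^{4(\alpha-s)}$ steps, giving a total increment $\sim TN^{7(\alpha-s)}$, which is never $\lesssim N^{2(\alpha-s)}$. The paper closes the loop by measuring the difference equation at the \emph{lower} regularity $s_0=\frac12+$, where the high-frequency datum is genuinely small, $\|\Psi_0\|_{H^{s_0}}\lesssim N^{s_0-s}$, so that the multilinear smoothing estimate gives $\|w_1(t_1)\|_{H^\alpha}\lesssim N^{2\alpha+s_0-3s}$; you gesture at this (``lower-regularity norms carry an extra $N^{-\rho}$'') but never set it up or run the numerology.

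Even with that repaired, the straightforward increment bound $|H(\Phi_1)-H(v(t_1))|\lesssim N^{2\alpha+s_0-3s}N^{3(\alpha-s)}$ summed over $T/\delta$ steps only yields $s>\frac{7\alpha}{8}+\frac1{16}$, strictly worse than $\frac{10\alpha+1}{12}$ for $\alpha>\frac12$. The stated threshold needs the paper's additional refinement: an $L^2$ bound on the accumulated remainder via Duhamel and the $L^4_{t,x}$ Strichartz estimate (of size $N^{\alpha+s_0-2s}\lesssim1$ after all steps), followed by interpolation between $L^2$ and $H^\alpha$ in the quartic part of $H(f+g)-H(f)$, which improves the per-step increment to $N^{3\alpha+\frac12-4s+}$ and the accumulation condition to $TN^{7\alpha+\frac12-8s+}\lesssim N^{2\alpha-2s}$, i.e.\ $s>\frac{5\alpha}{6}+\frac1{12}=\frac{10\alpha+1}{12}$. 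Two smaller omissions: after the first step the smooth part is no longer frequency-localized, so the difference equation retains a residual linear term of size $\big|\|u_0\|_2^2-\|\Phi_0\|_2^2\big|\lesssim N^{-s}$ times $v$, which must be estimated (your Wick-ordered formulation does not remove it); and the smoothing estimate you flag as ``the real obstacle'' is already supplied by Section 5, so the substance of Theorem~\ref{thm:GWP} is precisely the quantitative bookkeeping you defer.
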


\noindent
{\bf Remark.}  We will prove Theorem~\ref{thm:GWP} only for the defocusing case. As we mentioned in our introductory remarks since the problem is mass sub-critical for $\alpha>\frac{1}{2}$, we can also control the $H^{\alpha}$ norm of the solution by Gagliardo-Nirenberg inequality in the focusing case. Once we have the control of the norm in terms of the initial energy, the proof of the theorem follows along the same lines. In particular we obtain the same global well-posedness results with the same global-in-time bounds for the focusing problem.
\vskip 0.05in
\noindent

The paper is organized as follows.  In section 2 we introduce our notation and define the spaces that the iteration will take place. In addition we state two elementary lemmas that we use in proving the Strichartz estimates and the multilinear estimates. Section 3 contains the proof of the Strichartz estimate. It is obtained by a careful analysis of the resonant terms and non resonant interacting terms. Section 4 contains the local well-posedness theory for the model equation. We prove multilinear estimates in the $X^{s,b}$ spaces defined in section 2. In section 5 we prove the main smoothing estimate of this paper. The reader should notice that the estimate is sharp within the tools used and for $\alpha=1$ it coincides with the smoothing estimate for the NLS that was recently obtained in \cite{talbot}. Finally in section 6 we use the established local theory and the smoothing estimate to prove global well-posedness for infinite energy solutions. As a final remark we note that our global-in-time results are not optimal.

\section{Notation and Preliminaries}
First of all recall that for $s\geq 0$, $H^s(\T)$ is defined as a subspace of $L^2$ via the norm
$$
\|f\|_{H^s(\T)}:=\sqrt{\sum_{k\in\Z} \la k\ra^{2s} |\widehat{f}(k)|^2},
$$
where $\la k\ra:=(1+k^2)^{1/2}$ and $\widehat{f}(k)=\frac{1}{2\pi}\int_0^{2\pi}f(x)e^{-ikx} dx$ are the Fourier coefficients of $f$. Plancherel's theorem takes the form
$$\sum_{k\in\Z}  |\widehat{f}(k)|^2=\frac{1}{2\pi}\int_{0}^{2\pi}|f(x)|^2dx.$$
We denote the linear propagator of the Schr\"odinger equation as $e^{it(-\Delta)^\alpha}$, where it is defined on the Fourier side as $\widehat{(e^{it(-\Delta)^\alpha}f)}(n)=e^{ it n^{2\alpha}}\widehat{f}(n)$. Similarly, $|\nabla|^{\alpha}$ is defined as $\widehat{|\nabla|^{\alpha}f)}(n)=  n^{ \alpha} \widehat{f}(n)$.
We   also use $(\cdot)^+$ to denote $(\cdot)^\epsilon$ for all $\epsilon>0$ with implicit constants depending on $\epsilon$.

  The Bourgain spaces, $X^{s,b}$, will be defined as the closure of compactly supported smooth functions under the norm $$\|u\|_{X^{s,b}}\dot{=}\|e^{-it(-\Delta)^\alpha}u\|_{H^b_t(\mathbb{R})H^s_x(\mathbb{T})}=\|\langle \tau-|n|^{2\alpha} \rangle^{b} \langle n \rangle^s\widehat{u}(n,\tau)\|_{L_{\tau}^2 l^2_{n}},$$ and the restricted norm will be given as $$\|u\|_{X_T^{s,b}}\dot{=}\inf(\|v\|_{X^{s,b}},\ for\ v=u\ on\ [0,T]).$$

In this paper, by local and global well-posedness we mean the following. 
\begin{defin} We say the equation \eqref{sch} is locally well-posed in $H^s$, if there exist a time $T_{LWP}=T_{LWP}(\|u_0\|_{H^s})$ such that the solution exists and is unique in $X^{s,b}_{T_{LWP}}\subset C([0,T_{LWP}),H^s)$ and depends continuously on the initial data. We say that the the equation is globally well-posed when $T_{LWP}$ can be taken arbitrarily large.
\end{defin} 

We close this section by presenting two elementary lemmas that will be used repeatedly. For the proof of the first lemma see  the Appendix of \cite{erdtzi1}.
\begin{lemma}\label{lem:sums}   If  $\beta\geq \gamma\geq 0$ and $\beta+\gamma>1$, then
\be\nn
\sum_n\frac{1}{\la n-k_1\ra^\beta \la n-k_2\ra^\gamma}\lesssim \la k_1-k_2\ra^{-\gamma} \phi_\beta(k_1-k_2),
\ee
and 
\be\nn
\int_\R \frac{1}{\la \tau-k_1\ra^\beta \la \tau-k_2\ra^\gamma} d\, \tau \lesssim \la k_1-k_2\ra^{-\gamma} \phi_\beta(k_1-k_2),
\ee
where
 \be\nn
\phi_\beta(k):=\sum_{|n|\leq |k|}\frac1{\la n\ra^\beta}\sim \left\{\begin{array}{ll}
1, & \beta>1,\\
\log(1+\la k\ra), &\beta=1,\\
\la k \ra^{1-\beta}, & \beta<1.
 \end{array}\right.
\ee
\end{lemma}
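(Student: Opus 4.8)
The plan is to reduce both estimates to a single one-parameter inequality by translating the summation (resp.\ integration) variable, and then to decompose the domain into frequency shells according to which of the two factors in the denominator is small. Setting $k:=k_1-k_2$ and substituting $n=m+k_1$ (respectively $\tau\mapsto\tau+k_1$), the two claims become
\be\nn
\sum_{m\in\Z}\frac1{\la m\ra^\beta\la m+k\ra^\gamma}\lesssim\la k\ra^{-\gamma}\phi_\beta(k)\qquad\text{and}\qquad\int_\R\frac{d\tau}{\la \tau\ra^\beta\la\tau+k\ra^\gamma}\lesssim\la k\ra^{-\gamma}\phi_\beta(k),
\ee
which are proved by the identical argument, so I would write out only the sum. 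When $|k|\lesssim1$ the right-hand side is $\sim1$, and since $\la m\ra\sim\la m+k\ra$ whenever $|m|\gg|k|$ and $\beta+\gamma>1$, the left-hand side converges; hence I may assume $|k|\gg1$.

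Next I would split $\Z=A\cup B\cup C$ with $A=\{|m|\le|k|/2\}$, $B=\{|m+k|\le|k|/2\}$ and $C$ the complement. On $A$ one has $\la m+k\ra\gtrsim\la k\ra$, so $\sum_A\lesssim\la k\ra^{-\gamma}\sum_{|m|\le|k|/2}\la m\ra^{-\beta}\sim\la k\ra^{-\gamma}\phi_\beta(k)$, which is exactly the desired bound. On $B$ one has $\la m\ra\sim\la k\ra$, so $\sum_B\lesssim\la k\ra^{-\beta}\sum_{|j|\le|k|/2}\la j\ra^{-\gamma}\sim\la k\ra^{-\beta}\phi_\gamma(k)$, and it remains to check $\la k\ra^{-\beta}\phi_\gamma(k)\lesssim\la k\ra^{-\gamma}\phi_\beta(k)$, equivalently $\phi_\gamma(k)\la k\ra^\gamma\lesssim\phi_\beta(k)\la k\ra^\beta$; this is a short case check from the explicit asymptotics of $\phi$ together with $\gamma\le\beta$, the borderline case $\gamma\le\beta<1$ giving equality of both sides up to constants. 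Finally, on $C$ both $\la m\ra$ and $\la m+k\ra$ are $\gtrsim\la k\ra$; splitting $C$ once more into $\{|m|\le2|k|\}$ (where $\la m\ra\sim\la k\ra$ and $m+k$ ranges over $|j|\lesssim|k|$) and $\{|m|>2|k|\}$ (where $\la m\ra\sim\la m+k\ra$, so the sum is $\lesssim\sum_{|m|>2|k|}\la m\ra^{-\beta-\gamma}\lesssim\la k\ra^{1-\beta-\gamma}$ using $\beta+\gamma>1$), each piece is $\lesssim\la k\ra^{1-\beta-\gamma}$, and $\la k\ra^{1-\beta-\gamma}\le\la k\ra^{-\gamma}\phi_\beta(k)$ because $\la k\ra^{1-\beta}\le\phi_\beta(k)$ in each of the regimes $\beta>1$, $\beta=1$, $\beta<1$.

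The only point requiring care is the bookkeeping when $\beta<1$: there $\phi_\beta$ genuinely grows like $\la k\ra^{1-\beta}$, so on region $B$ one cannot discard the $\la m\ra^{\beta-\gamma}$-decay---one must use $\la m\ra\sim\la k\ra$ rather than merely $\la m\ra\gtrsim1$---and this is exactly what makes the elementary inequality $\phi_\gamma(k)\la k\ra^\gamma\lesssim\phi_\beta(k)\la k\ra^\beta$ sufficient. Everything else is the standard resonant-versus-nonresonant shell decomposition, and the integral estimate follows verbatim; this reproduces the argument in the Appendix of \cite{erdtzi1}.
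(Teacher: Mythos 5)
Your proof is correct: the translation to $\sum_m \la m\ra^{-\beta}\la m+k\ra^{-\gamma}$, the decomposition into the regions $|m|\le |k|/2$, $|m+k|\le |k|/2$ and the complement, and the monotonicity check $\la k\ra^{\gamma}\phi_\gamma(k)\lesssim\la k\ra^{\beta}\phi_\beta(k)$ all hold, including the borderline cases $\gamma=0$ and $\beta<1$, and the integral version does follow verbatim. The paper gives no proof of its own but defers to the Appendix of \cite{erdtzi1}, and your argument is essentially the standard shell-decomposition proof found there.
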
 

\begin{lemma}\label{freq_est} Fix $\alpha\in (1/2,1)$. For $n,j,k\in \Z$, we have 
  $$g(j,k,n):=|(n+k)^{2\alpha}-(n+j+k)^{2\alpha}+(n+j)^{2\alpha}-n^{2\alpha}|\ges \frac{|k||j|}{(|k|+|j|+|n|)^{2-2\alpha}},$$
  where the implicit constant depends on $\alpha$.    
\end{lemma}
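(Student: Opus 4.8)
The plan is to reduce the four-term expression to a product structure by applying the mean value theorem twice, once in each of the two "independent" increments $j$ and $k$. Concretely, set $F(x) := |x|^{2\alpha}$ and note that
$$g(j,k,n) = \big|\big(F(n+k) - F(n)\big) - \big(F(n+j+k) - F(n+j)\big)\big|.$$
Writing $G(y) := F(y+k) - F(y)$, this is $|G(n) - G(n+j)| = |j|\,|G'(\xi)|$ for some $\xi$ between $n$ and $n+j$, by the mean value theorem (valid since $\alpha > 1/2$ means $F$ is $C^1$ on $\R$, with $F'(x) = 2\alpha\,\sgn(x)|x|^{2\alpha-1}$). Now $G'(\xi) = F'(\xi+k) - F'(\xi)$, and a second application of the mean value theorem to $F'$ gives $G'(\xi) = k\,F''(\eta)$ for some $\eta$ between $\xi$ and $\xi+k$, where $F''(x) = 2\alpha(2\alpha-1)|x|^{2\alpha-2}$. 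Hence $g(j,k,n) = 2\alpha(2\alpha-1)\,|j|\,|k|\,|\eta|^{2\alpha-2}$ for some $\eta$ lying in an interval determined by $n$, $j$, $k$. Since $2\alpha - 2 < 0$, the factor $|\eta|^{2\alpha-2}$ is bounded below by $(\text{largest possible }|\eta|)^{2\alpha-2}$, and one checks $|\eta| \lesssim |n| + |j| + |k|$ from the nesting of the mean-value points, giving the claimed lower bound $\gtrsim |j||k|(|k|+|j|+|n|)^{2-2\alpha}{}^{-1}$.

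The one subtlety, and the step I expect to be the main obstacle, is that $F'' $ is \emph{not} continuous at the origin and in fact blows up there when $2\alpha - 2 < 0$; so the "second mean value theorem" step is not literally applicable when the relevant interval for $\eta$ straddles $0$. This is a feature, not a bug, for a \emph{lower} bound — near the origin $|F''|$ is large — but it must be handled honestly. I would deal with it by case analysis on the location of $n, n+j, n+k, n+j+k$ relative to $0$: if all four points have the same sign, $F$ is genuinely $C^\infty$ on the interval in question and the argument above goes through verbatim with $|\eta| \le \max(|n|,|n+j|,|n+k|,|n+j+k|) \lesssim |n|+|j|+|k|$. If the points have mixed signs, then the interval spanned by them contains a neighborhood of $0$, which forces $|n|, |j|, |k|$ to be comparable (each is $\lesssim$ the diameter $\lesssim |n|+|j|+|k|$, and the diameter is $\lesssim |j|+|k|$), so it suffices to prove $g(j,k,n) \gtrsim (|j|+|k|)^{2\alpha}$ in this regime; this can be obtained by direct estimation using convexity/concavity of $|x|^{2\alpha}$ on each half-line, comparing the "chord defect" across $0$.

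Alternatively — and this is probably the cleaner writeup — I would use the integral form throughout and avoid pointwise evaluation of $F''$: write
$$g(j,k,n) = \left| \int_0^{j}\!\!\int_0^{k} F''(n + s + t)\, ds\, dt \right| = 2\alpha(2\alpha-1)\left| \int_0^{j}\!\!\int_0^{k} |n+s+t|^{2\alpha-2}\, ds\, dt \right|,$$
with the obvious sign conventions when $j$ or $k$ is negative. Because the integrand $|n+s+t|^{2\alpha-2}$ is strictly positive (it is only $+\infty$ on a measure-zero set, which is harmless), the double integral does not lose to cancellation, and we get
$$g(j,k,n) = 2\alpha(2\alpha-1) \int_{\min(0,j)}^{\max(0,j)}\!\!\int_{\min(0,k)}^{\max(0,k)} |n+s+t|^{2\alpha-2}\, ds\, dt \;\gtrsim\; |j|\,|k| \cdot \big(|n| + |j| + |k|\big)^{2\alpha - 2},$$
using $|n + s + t| \le |n| + |j| + |k|$ on the domain of integration together with $2\alpha - 2 < 0$. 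This is exactly the asserted bound, and it sidesteps the regularity issue at $0$ entirely since integrating $|x|^{2\alpha-2}$ is fine for $2\alpha - 2 > -1$, i.e. $\alpha > 1/2$, which is precisely the standing hypothesis.
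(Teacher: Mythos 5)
Your integral-form argument (the ``alternatively'' version) is correct and complete, and it takes a genuinely different route from the paper. The paper sets $f_c(x)=|x+c|^{2\alpha}-|x-c|^{2\alpha}$, writes $g(j,k,n)=\bigl|f_{j/2}(n+\tfrac j2)-f_{j/2}(n+k+\tfrac j2)\bigr|$, applies the mean value theorem once in the $k$-direction, and then proves the pointwise lower bound $f'_c(x)\gtrsim |c|/\max(|c|,|x|)^{2-2\alpha}$ by a three-case analysis on the relative size of $x$ and $c$; this packages the second difference quotient into an explicit derivative computation rather than a second integration. Your version instead writes $g$ as $2\alpha(2\alpha-1)\bigl|\int_0^j\int_0^k|n+s+t|^{2\alpha-2}\,ds\,dt\bigr|$ and uses only that the integrand is positive and monotone in $|n+s+t|$, together with $|n+s+t|\le|n|+|j|+|k|$ on the rectangle of integration and $2\alpha-2\in(-1,0)$ so that the singularity at the origin is integrable. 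Both proofs exploit the same convexity information, but yours avoids the case analysis entirely and makes the role of the hypothesis $\alpha>\tfrac12$ (local integrability of $|x|^{2\alpha-2}$ and positivity of $2\alpha-1$) completely transparent; the paper's version yields the slightly sharper intermediate bound with $\max(|j|,|\gamma|)$ in the denominator, which is not needed for the lemma as stated.

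One caveat: in your first (double mean value theorem) sketch, the proposed reduction of the mixed-sign case to proving $g(j,k,n)\gtrsim(|j|+|k|)^{2\alpha}$ is to a \emph{false} statement. For example, $n=-m$, $j=2m$, $k=1$ has the four points straddling the origin, yet $g=(m+1)^{2\alpha}-(m-1)^{2\alpha}\approx m^{2\alpha-1}\ll(|j|+|k|)^{2\alpha}\approx m^{2\alpha}$ (the lemma's actual right-hand side is $\approx m^{2\alpha-1}$ here, so the lemma is fine but your proposed sufficient condition is not). So that route would need repair; rely on the integral version, which needs none.
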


  \begin{proof}  
 Let $f_c(x)=(x+c)^{2\alpha}-(x-c)^{2\alpha}$. We have
$$
g(j,k,n)=\big|f_{\frac{j}{2}}(n+\frac{j}{2})-f_{\frac{j}{2}}(n+k+\frac{j}{2})\big|.
$$ 
We claim that 
$$f'_c(x)\ges \frac{|c|}{\max(|c|,|x|)^{2-2\alpha}}.$$
Using the claim, we have by the mean value theorem (for $j,k\neq 0$)
\begin{eqnarray}
g(j,k,n)=  \big|f_{\frac{j}{2}}(n+\frac{j}{2})-f_{\frac{j}{2}}(n+k+\frac{j}{2})\big| &\ges & |k||j| \min_{\gamma\in(n+\frac{j}{2},n+k+\frac{j}{2})}\frac{1}{\max(\frac{|j|}{2},|\gamma|)^{2-2\alpha}}\non\\
  &\ges& \frac{|k||j|}{(|k|+|j|+|n|)^{2-2\alpha}}\non.
  \end{eqnarray}
It remains to prove the claim.   
 Since $f_c$ is odd, and $j\neq 0$, it suffices to consider $x\geq 0$ and $c\ges 1$. We have
  $$f'_c(x)=2\alpha\big[(x+c)^{2(\alpha-1)}|x+c|-(x-c)^{2(\alpha-1)}|x-c|\big].$$ We consider three cases:
  
\noindent 
Case 1. $0\leq x\leq c\Rightarrow f'_c(x)=2\alpha\big[(x+c)^{2\alpha-1}+(x-c)^{2\alpha-1}\big]$. Thus $$f'_c(x)\ges c^{2\alpha-1}.$$

\noindent Case 2. $c\leq x\les c\Rightarrow f'_c(x)=2\alpha\big[(x+c)^{2\alpha-1}-(x-c)^{2\alpha-1}\big]$. Then  we get 
$$f'_c(x)\ges c^{2\alpha-1} \Big(\big(\frac{x}{c}+1\big)^{2\alpha-1}-\big(\frac{x}{c}-1\big)^{2\alpha-1}\Big)\gtrsim c^{2\alpha-1} .$$

\noindent Case 3. $x\gg c \Rightarrow f'_c(x)=2\alpha\big[(x+c)^{2\alpha-1}-(x-c)^{2\alpha-1}\big]$. Then we have 
$$f'_c(x)=2\alpha x^{2\alpha-1} \Big(\big(1+\frac{c}{x}\big)^{2\alpha-1}-\big(1-\frac{c}{x}\big)^{2\alpha-1}\Big) \approx  x^{2\alpha-1} \frac{c}{x} = x^{2\alpha-2}c.$$
  
\noindent
Hence, in all cases we have $f'_c(x)\ges \frac{|c|}{\max(|c|,|x|)^{2-2\alpha}}$.

  \end{proof}
  
  \section{Strichartz Estimates}

 \begin{theorem}\label{str}

  $\|e^{it(-\Delta)^{\alpha}}f\|_{L^4_{t\in\T}L^4_{x\in\T}}\les \|f\|_{H^s}$ for $s> \frac{1-\alpha}{4}. $ 

 \end{theorem}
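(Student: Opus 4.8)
\medskip
\noindent\textbf{Proof proposal.} Write $u(x,t)=e^{it(-\Delta)^\alpha}f$, so that $\widehat{u(\cdot,t)}(n)=e^{it|n|^{2\alpha}}\widehat f(n)$. Since $|u|^4=|u^2|^2$, we have $\|u\|_{L^4_{t,x}}^4=\|u^2\|_{L^2_{t,x}}^2$, and Plancherel in $x$ gives
\[
\|u\|_{L^4_{t,x}}^4=2\pi\sum_{\mu\in\Z}\int_\T\Big|\sum_{n_1+n_2=\mu}\widehat f(n_1)\widehat f(n_2)\,e^{it(|n_1|^{2\alpha}+|n_2|^{2\alpha})}\Big|^2dt .
\]
Expanding the square and integrating the exponentials $e^{it\phi}$, with $\phi=|n_1|^{2\alpha}+|n_2|^{2\alpha}-|m_1|^{2\alpha}-|m_2|^{2\alpha}$, over one period (so that $\big|\int e^{it\phi}\,dt\big|\lesssim\min(1,|\phi|^{-1})$), the theorem reduces to the lattice estimate
\[
\sum_{n_1+n_2=m_1+m_2}\big|\widehat f(n_1)\widehat f(n_2)\widehat f(m_1)\widehat f(m_2)\big|\,\min\!\big(1,|\phi|^{-1}\big)\ \lesssim\ \|f\|_{H^s}^4,\qquad s>\tfrac{1-\alpha}{4}.
\]

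I would first dispose of the \emph{resonant} part, namely the tuples with $m_1\in\{n_1,n_2\}$: on the hyperplane $n_1+n_2=m_1+m_2$ this forces $\{m_1,m_2\}=\{n_1,n_2\}$ and $\phi=0$, and the sum over these is at most $\sum_\mu\sum_{n_1+n_2=\mu}|\widehat f(n_1)|^2|\widehat f(n_2)|^2=\|f\|_{L^2}^4\le\|f\|_{H^s}^4$, with no loss of derivatives. For the remaining \emph{non--resonant} tuples, set $d_1=|n_1-m_1|=|n_2-m_2|\ge1$, $d_2=|n_1-m_2|=|n_2-m_1|\ge1$ (the equalities are forced by the hyperplane relation) and $M=\max(|n_1|,|n_2|,|m_1|,|m_2|)$. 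Applying Lemma~\ref{freq_est} to the four points $m_1,n_2,n_1,m_2$ (i.e. with $n=m_1$, $k=n_1-m_1$, $j=n_2-m_1$, so $n+j+k=m_2$) gives $|\phi|\gtrsim d_1 d_2\langle M\rangle^{-(2-2\alpha)}$, hence $\min(1,|\phi|^{-1})\lesssim\min\!\big(1,\langle M\rangle^{2-2\alpha}\langle d_1\rangle^{-1}\langle d_2\rangle^{-1}\big)$.

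It remains to sum this weight against $\prod|\widehat f|$. I would write $\widehat f(n)=\langle n\rangle^{-s}F_n$, $\|F\|_{\ell^2}=\|f\|_{H^s}$, and — using that the summand is invariant under $n_1\leftrightarrow n_2$, $m_1\leftrightarrow m_2$ and $(n_1,n_2)\leftrightarrow(m_1,m_2)$ — assume $M=|n_1|$. If one of $d_1,d_2$ is $\gtrsim|n_1|$, then $\langle M\rangle^{2-2\alpha}\langle d_1\rangle^{-1}\langle d_2\rangle^{-1}$ becomes a product of strictly negative powers of $\langle d_1\rangle,\langle d_2\rangle$ (here $\alpha>\tfrac12$ is used), so the derivative loss is gone and the resulting sum is controlled by Cauchy--Schwarz together with Lemma~\ref{lem:sums} (a routine loss--free multilinear estimate). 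In the remaining balanced regime $|n_1|\sim|n_2|\sim|m_1|\sim|m_2|=:N$ one uses the pointwise bound, applies Cauchy--Schwarz in the ``physical'' split $(|\widehat f(n_1)\widehat f(n_2)|)\cdot(|\widehat f(m_1)\widehat f(m_2)|)$, and in each factor sums the free lattice variable by means of the one--variable estimate
\[
\sum_{m}\min\!\Big(1,\frac{N^{2-2\alpha}}{\langle a-m\rangle\langle b-m\rangle}\Big)\ \lesssim\ N^{1-\alpha}\log N\qquad(|a|,|b|\lesssim N),
\]
which follows from Lemma~\ref{lem:sums} (for the part where the $\min$ equals $N^{2-2\alpha}\langle a-m\rangle^{-1}\langle b-m\rangle^{-1}$) and a direct lattice--point count under a hyperbola (for the part where the $\min$ equals $1$); dyadic summation in $N$ then closes the estimate precisely for $s>\tfrac{1-\alpha}{4}$.

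The crux is this last step. Lemma~\ref{freq_est} only yields the lower bound $|\phi|\gtrsim d_1 d_2\langle M\rangle^{-(2-2\alpha)}$, so the time integral carries a genuine derivative loss $\langle M\rangle^{2-2\alpha}$; retaining the truncation $\min(1,|\phi|^{-1})$ is indispensable, precisely because it tames the near--diagonal interactions — where $d_1,d_2$ are bounded, $M$ is large, $|\phi|\lesssim\langle M\rangle^{2\alpha-2}\to0$ and $\int e^{it\phi}\,dt\approx2\pi$ — and because it is what caps the displayed one--variable sum at $N^{1-\alpha}$ rather than $N^{2-2\alpha}$. Carrying this single loss through the Cauchy--Schwarz is exactly what pins the threshold at $s>\tfrac{1-\alpha}{4}$; for $\alpha=1$ the loss disappears and the argument collapses to the classical number--theoretic proof of $\|e^{it\Delta}f\|_{L^4_{t,x}(\T\times\T)}\lesssim\|f\|_{L^2}$.
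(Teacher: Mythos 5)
Your proposal is correct and follows essentially the same route as the paper: the identical reduction of $\|e^{it(-\Delta)^{\alpha}}f\|_{L^4}^4$ to a lattice sum on the hyperplane $n_1+n_2=m_1+m_2$ weighted by $\min(1,|\phi|^{-1})$ after integrating in time, the same phase lower bound via Lemma~\ref{freq_est}, and the same Cauchy--Schwarz-plus-lattice-counting treatment of the resonant and near-resonant pieces. The only difference is bookkeeping: you organize the non-resonant terms by whether $\max(d_1,d_2)\gtrsim M$ together with a dyadic decomposition in the top frequency, whereas the paper splits according to $|kj|$ versus $(|k|+|j|+|n|)^{2-2\alpha}$ and then into three regions comparing $|k|$ with $|n|$; both give the threshold $s>\frac{1-\alpha}{4}$.
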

 
 \begin{proof}

Notice that in this proof we can always take $s<\frac14$.  Calling $g=\langle \nabla \rangle^s f$, and denoting $\widehat{g}(k)$ by $g_k$, we write
  
  \begin{align*}
  \|e^{it(-\Delta)^{\alpha}}f\|^4_{L^4_{t}L^4_{x}} & =  \int_0^{2\pi}\int_0^{2\pi}\sum_{k_1,k_2,k_3,k_4}\frac{e^{it(k_1^{2\alpha}-k_2^{2\alpha}+k_3^{2\alpha}-k_4^{2\alpha})}e^{ix(k_1-k_2+k_3-k_4)}g_{k_1}\overline{g_{k_2}}g_{k_3}\overline{g_{k_4}}
}{\langle k_1 \rangle^{s}\langle k_2 \rangle^{s}\langle k_3 \rangle^{s}\langle k_4 \rangle^{s}}dx dt\nonumber\\
  &= \int_0^{2\pi}\sum_{k_1-k_2+k_3-k_4=0}\frac{e^{it(k_1^{2\alpha}-k_2^{2\alpha}+k_3^{2\alpha}-k_4^{2\alpha})}g_{k_1}\overline{g_{k_2}}g_{k_3}\overline{g_{k_4}}
}{\langle k_1 \rangle^{s}\langle k_2 \rangle^{s}\langle k_3 \rangle^{s}\langle k_4 \rangle^{s}}dt \\
 &\les  \sum_{k_1-k_2+k_3-k_4=0  }\frac{|g_{k_1}|| g_{k_2} ||g_{k_3}|| g_{k_4}|}{\langle k_1 \rangle^{s}\langle k_2 \rangle^{s}\langle k_3 \rangle^{s}\langle k_4 \rangle^{s}} \frac{1}{\max(1,|k_1^{2\alpha}-k_2^{2\alpha}+k_3^{2\alpha}-k_4^{2\alpha }|)} 
    \end{align*}
Renaming the variables as $k_1=n+j$, $k_2=n+k+j$, $k_3=n+k$, and $k_4=n$, and using Lemma~\ref{freq_est}, we get
  \begin{align*}
  \|e^{it(-\Delta)^{\alpha}}f\|^4_{L^4_{t}L^4_{x}}&\les 
\sum_{n,k,j}\frac{|g_{n}||g_{n+j}||g_{n+k}||g_{n+k+j}|}
{\langle n \rangle^{s}\langle n+k \rangle^{s}\langle n+j \rangle^{s}\langle n+k+j \rangle^{s}} \frac{1}{\max\big(1,\frac{|kj|}{(|k|+|j|+|n|)^{2-2\alpha}}\big)}\\
&:= I+II
\end{align*}
where $I$ contains the terms with $|kj|\ll(|k|+|j|+|n|)^{2-2\alpha}$ and $II$ contains the remaining terms.
\vskip 0.02in
\noindent
First note that the summation set in $I$ does not contain any terms with both $n=0$ and  $|kj|\neq 0$ since $\alpha\in (1/2,1) $.   
Also noting that if $kj\neq 0$, then 
$$|kj|\ll (|k|+|j|+|n|)^{2-2\alpha}  \les |k|^{2-2\alpha}+|j|^{2-2\alpha}+|n|^{2-2\alpha} \les |kj|+|n|^{2-2\alpha},$$  since $\alpha\in (1/2,1) $. We can thus write 
$$
  I  \les  \sum_{n,k,j\atop 0<|kj|\les |n|^{2-2\alpha}}\frac{|g_{n}||g_{n+j}||g_{n+k}||g_{n+k+j}|}
{\langle n \rangle^{s}\langle n+k \rangle^{s}\langle n+j \rangle^{s}\langle n+k+j \rangle^{s}} +\sum_{j,n}|g_n|^2|g_{n+j}|^2+\sum_{k,n}|g_n|^2|g_{n+k}|^2.
$$
The last two sums are equal to  $\|g\|_{L^2}^4$. We estimate the first sum by Cauchy-Schwarz inequality to get
\begin{align}
& \les  \Big(\sum_{n,k,j}|{g_{n+j}}|^2|g_{n+k}|^2|g_{n+k+j}|^2\Big)^{1/2}\Big(\sum_{n,k,j\atop 0<|kj|\les |n|^{2-2\alpha}}\frac{|g_{n}|^2}{\langle n \rangle^{2s}\langle n+k \rangle^{2s}\langle n+j \rangle^{2s}\langle n+k+j \rangle^{2s}}\Big)^{1/2}\non\\ 
& \les \|g\|_{L^2}^4 \sup_n\Big(\sum_{ k,j\atop 0<|kj|\les |n|^{2-2\alpha}}\frac{1}
{\langle n \rangle^{2s}\langle n+k \rangle^{2s}\langle n+j \rangle^{2s}\langle n+k+j \rangle^{2s}} \Big)^{1/2}.\non
  \end{align}
   The condition on the sum implies, except for finitely many $n$'s, that $|k|\ll |n|$ and $|j|\ll |n|$. Therefore
   \begin{multline*}
 \sum_{  k,j\atop 0<|kj|\les |n|^{2-2\alpha}}\frac{1}
{\langle n \rangle^{2s}\langle n+k \rangle^{2s}\langle n+j \rangle^{2s}\langle n+k+j \rangle^{2s}}\\ \les \frac{1}{\langle n \rangle^{8s}} \sum_{0<|kj|\les |n|^{2-2\alpha}} 1 \les \langle n \rangle^{2-2\alpha -8s}log\langle n \rangle \les 1 
   \end{multline*}
 provided that  $s>\frac{1-\alpha}{4}$.

  \noindent
  For the second sum we have,
  
  \begin{equation*}
  II\les \sum_{n,k,j\atop |kj|\ges |n|^{2-2\alpha}}\frac{|g_{n}||g_{n+j}||g_{n+k}||g_{n+k+j}|(|n|+|k|+|j|)^{2-2\alpha}}
{\langle n \rangle^{s}\langle n+k \rangle^{s}\langle n+j \rangle^{s}\langle n+k+j \rangle^{s}|kj|}.
  \end{equation*}
  
 Using  the symmetry in $k$ and $j$, we have
  \begin{equation*}
  II\les \sum_{n,k,j\atop |kj|\ges |n|^{2-2\alpha},\,|k|\geq |j|}\frac{|g_{n}||g_{n+j}||g_{n+k}||g_{n+k+j}|(|n|+|k|)^{2-2\alpha}}
{\langle n \rangle^{s}\langle n+k \rangle^{s}\langle n+j \rangle^{s}\langle n+k+j \rangle^{s}|kj|}.
  \end{equation*}

\vskip 0.05in
\noindent  
To estimate the sum we consider three frequency regions, $|k|\approx| n|$, $|k|\ll |n|$, and $|k|\gg|n|$.
\vskip 0.05in  
\noindent Region 1. $|k|\approx |n|$. In this region, using Cauchy Schwarz inequality as above, it suffices to show that the sum 
$$
\sum_{|k|\geq |j|\atop |k|\approx |n|}\frac{(|n|+|k|)^{4-4\alpha}}{\la n \ra^{2s} \langle n+k \rangle^{2s}\langle n+j \rangle^{2s}\langle n+k+j \rangle^{2s}k^2 j^2}
$$
is bounded in $n$. We bound this by 
  \begin{align*} 
   \sum_{|k|\geq |j|\atop |k|\approx |n|}\frac{|n|^{2-4\alpha-2s}}{\langle n+k \rangle^{2s}\langle n+j \rangle^{2s}\langle n+k+j \rangle^{2s}j^2}. 
  \end{align*} 
Using the inequality
$$
\la m+j\ra \la j\ra \gtrsim \la m\ra,
$$
and recalling that $s<\frac14$, we obtain
$$
  \les  \sum_{|k|\geq |j|\atop |k|\approx |n|}\frac{|n|^{2-4\alpha-4s}}{\langle n+k \rangle^{4s}j^{2-4s}}\les   \langle n \rangle^{2-4\alpha-4s+1-4s}.
$$  
Here we first summed in $j$ and then in $k$. The sum is bounded in $n$ provided that  $s>\frac{3-4\alpha}{8}$.  
  
\vskip 0.05in  
\noindent Region 2. $|k|\ll |n|$. As in Region 1, it suffices to show that the sum 
$$
\sum_{|j|\leq |k|\ll |n| \atop |kj|\ges|n|^{2-2\alpha}}\frac{|n|^{4-4\alpha}}{\la n \ra^{2s} \langle n+k \rangle^{2s}\langle n+j \rangle^{2s}\langle n+k+j \rangle^{2s}k^2 j^2} \approx \sum_{|j|\leq |k|\ll |n|\atop |kj|\ges|n|^{2-2\alpha}}\frac{|n|^{4-4\alpha-8s}}{ k^2j^2}
$$
  is bounded in $n$. To this end, notice that
  \begin{align*} 
   \sum_{|j|\leq |k|\ll |n|\atop |kj|\ges|n|^{2-2\alpha}}\frac{|n|^{4-4\alpha-8s}}{ k^2j^2} \les  \sum_{|j|\leq |k|\ll |n|}\frac{|n|^{4-4\alpha-8s}}{|j||k|\langle n \rangle^{2-2\alpha}} \les  \sup_n |n|^{2-2\alpha-8s}\log(|n|)^2,\non
  \end{align*}
  which is finite provided that   $s>\frac{1-\alpha}{4}$.   
  
\vskip 0.05in  
\noindent Region 3. $|k|\gg |n|$. In this region we bound the sum by Cauchy Schwarz inequality as follows:

\begin{align*}
& \sum_{|j|\leq |k|,\, |n| \ll |k|  \atop |kj|\ges |n|^{2-2\alpha}}\frac{|g_{n}||g_{n+j}||g_{n+k+j}| |g_{n+k}| |k|^{2-2\alpha}}
{\langle n \rangle^{s}\langle n+k \rangle^{s}\langle n+j \rangle^{s}\langle n+k+j \rangle^{s}|kj|}\\
&\les  \Big(\sum_{n,k,j}|{g_{n }}|^2|g_{n+j}|^2|g_{n+k+j}|^2\Big)^{1/2}\Big(\sum_{|j|\leq |k|,\, |n| \ll |k|   }\frac{|g_{n+k}|^2 |k|^{4-4\alpha}}{\langle n \rangle^{2s}\langle n+k \rangle^{2s}\langle n+j \rangle^{2s}\langle n+k+j \rangle^{2s} k^2j^2 }\Big)^{1/2}\non\\ 
 &\les \|g\|_{L^2}^3  \Big(\sum_{ |j|\leq |k|,\, |n| \ll |k|  }\frac{|g_{n+k}|^2 |k|^{2-4\alpha-2s}}
{\langle n \rangle^{2s} \langle n+j \rangle^{2s}\langle n+k+j \rangle^{2s} j^2}   \Big)^{1/2}.\non
\end{align*}
Estimating the $j$ sum in parenthesis as in Region 1, we have   
  \begin{align*}
 \les  \sum_{|n| \ll |k| }\frac{|g_{n+k}|^2 |k|^{2-4\alpha-2s}}{\langle n \rangle^{4s} \langle n+k  \rangle^{2s} } 
  \les  \sum_{|n| \ll |k| }\frac{|g_{n+k}|^2 |k|^{2-4\alpha-4s}}{\langle n \rangle^{4s}  }  \les  \sum_{n,k  } |g_{n+k}|^2 |k|^{1-2\alpha-4s} \langle n \rangle^{1-2\alpha- 4s}.
  \end{align*}
  We estimate this by Cauchy Schwarz 
  $$
\Big[  \sum_{n,k  } |g_{n+k}|^2 |k|^{2-4\alpha-8s}\Big]^{\frac12} \Big[  \sum_{n,k  } |g_{n+k}|^2 \langle n \rangle^{2-4\alpha- 8s}\Big]^{\frac12} \les \|g\|_{L^2}^2,
  $$
  provided that $2-4\alpha-8s<-1$, i.e. $s>\frac{3}{8}-\frac\alpha{2}$. In the last inequality we summed in $n$ and $k$ separately.
\vskip 0.03in
\noindent 
Thus, for $s>max(\frac{1-\alpha}{4},\frac{3-4\alpha}{8}) =  \frac{1-\alpha}{4} $,   for $\alpha>\frac{1}{2}$, we obtain the Strichartz estimates.
\end{proof}


 \section{Local well-posedness via the $X^{s,b}$ method}

 We will prove Theorem~\ref{thm:LWP} for the defocusing equation by obtaining multilinear estimates in $X^{s,b}$ spaces.  
With the change of variable $u(x,t)\to u(x,t) e^{iPt}$ in the equation \eqref{sch}, where $P=\frac{1}{\pi}\|u_0\|_2^2$, we obtain the equation
$$
iu_t+(-\Delta)^\alpha u+|u|^2u-Pu=0,\,\,\,\,\,\, t\in \R,\,\,\,x\in\T,
$$
with initial data in $u_0\in H^s(\T)$, $s>0$. 

Note the following identity which follows from Plancherel's theorem:
\begin{multline}\label{res_decomp}
\widehat{|u|^2u}(k)=\sum_{k_1,k_2 }   \widehat u(k_1)\overline{\widehat u(k_2)}\widehat u(k-k_1+k_2)\\ =\frac1\pi
\|u\|_2^2\widehat{u}(k)-|\widehat u(k)|^2\widehat{u}(k)+\sum_{k_1\neq k, k_2\neq k_1}  \widehat u(k_1)\overline{\widehat u(k_2)}\widehat u(k-k_1+k_2)\\
=: P  \widehat{u}(k) +\widehat{\rho(u)}(k)+ \widehat{R(u)}(k).
\end{multline} 
Using this in the Duhamel's formula, we have
$$
u(t) =e^{it(-\Delta)^\alpha}u_0(x)-i\int_0^t e^{i(-\Delta)^\alpha (t-t')}(\rho(u)+R(u))dt'.
$$
  By standard iteration techniques, it suffices to obtain an estimate of the form:
$$
\Big\|\int_0^t e^{i(-\Delta)^\alpha (t-t')}(\rho(u)+R(u))dt'\Big\|_{X_T^{s,b}}\les T^\delta \|u\|_{X^{s,b}_T}^3,
$$ 
for $s>\frac{1-\alpha}{2}$ and for some $b>\frac12$, $\delta>0$.
 
To prove this estimate and obtain a lower bound for the local existence time we need the following lemma:

\begin{lemma}\cite{gin} For $b,b'$ such that $0\leq b+b'<1$, $0\leq b'<1/2$, then we have $$\Big\|\int_0^t e^{i(-\Delta)^\alpha(t-\tau)}f(\tau)d\tau\Big\|_{X_T^{s,b}}\lesssim T^{1-b-b'}\|f\|_{X_T^{s,-b'}},$$ for $T\in [0,1].$
\end{lemma}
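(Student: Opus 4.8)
The plan is to reduce the claimed inequality to a purely one–dimensional (in the time variable) estimate, and then prove the latter by splitting the Duhamel integral according to whether the dual time frequency is $\les 1/T$ or $\ges 1/T$. This is the standard Ginibre--Ozawa--Velo argument of \cite{gin}; I record how I would organize it.

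\emph{Reduction to a scalar time estimate.} First I handle the restricted norm by extension: given $f$, pick $v$ with $v=f$ on $[0,T]$ and $\|v\|_{X^{s,-b'}}\le 2\|f\|_{X_T^{s,-b'}}$, and fix $\eta\in C_c^\infty(\R)$ with $\eta\equiv 1$ on $[-1,1]$ and $\mathrm{supp}\,\eta\subset[-2,2]$. For $T\le 1$ the function $t\mapsto \eta(t/T)\int_0^t e^{i(-\Delta)^\alpha(t-\tau)}v(\tau)\,d\tau$ agrees with the Duhamel term $\int_0^t e^{i(-\Delta)^\alpha(t-\tau)}f(\tau)\,d\tau$ on $[0,T]$ (since $\eta(t/T)\equiv1$ and $v=f$ there), so it suffices to bound its full $X^{s,b}$ norm. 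Conjugating by $e^{-it(-\Delta)^\alpha}$, using $e^{-it(-\Delta)^\alpha}\int_0^t e^{i(-\Delta)^\alpha(t-\tau)}v(\tau)\,d\tau=\int_0^t e^{-i\tau(-\Delta)^\alpha}v(\tau)\,d\tau$, and commuting $\langle\nabla\rangle^s$ through the integral, the claim becomes
$$\big\|\eta(t/T)\textstyle\int_0^t F(\tau)\,d\tau\big\|_{H^b_tL^2_x}\les T^{1-b-b'}\|F\|_{H^{-b'}_tL^2_x},\qquad F(\tau):=\langle\nabla\rangle^s e^{-i\tau(-\Delta)^\alpha}v(\tau),$$
and $\|F\|_{H^{-b'}_tL^2_x}=\|v\|_{X^{s,-b'}}$. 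Since $L^2_x$ enters only as a scalar Hilbert space, it suffices to prove this for scalar $F=F(t)$, and by density for $F$ Schwartz.

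\emph{The one–dimensional estimate.} For Schwartz $F$, $\int_0^t F=c\int_\R\widehat F(\sigma)\,\frac{e^{it\sigma}-1}{i\sigma}\,d\sigma$; I split $\widehat F=\widehat F\,\mathbf{1}_{|\sigma|\le 1/T}+\widehat F\,\mathbf{1}_{|\sigma|>1/T}$. For the low part, Taylor expand $\frac{e^{it\sigma}-1}{i\sigma}=\sum_{k\ge0}\frac{i^k}{(k+1)!}t^{k+1}\sigma^k$, so $\eta(t/T)\int_0^tF_{\rm lo}=\sum_{k\ge0}\frac{i^k}{(k+1)!}\big(t^{k+1}\eta(t/T)\big)\int_{|\sigma|\le1/T}\sigma^k\widehat F(\sigma)\,d\sigma$; writing $t^{k+1}\eta(t/T)=T^{k+1}g_k(t/T)$ with $g_k(s)=s^{k+1}\eta(s)$ and using $\|g(\cdot/T)\|_{H^b}\les T^{1/2-b}\|g\|_{H^b}$ (valid for $T\le1$, $b\ge0$) together with $\|g_k\|_{H^b}\les (k+2)2^k$, and estimating $\big|\int_{|\sigma|\le1/T}\sigma^k\widehat F\big|\le\big(\int_{|\sigma|\le1/T}|\sigma|^{2k}\langle\sigma\rangle^{2b'}\,d\sigma\big)^{1/2}\|F\|_{H^{-b'}}\les T^{-k-1/2-b'}\|F\|_{H^{-b'}}$, the $k$–th term is $\les \frac{(k+2)2^k}{(k+1)!}T^{1-b-b'}\|F\|_{H^{-b'}}$ and the series sums. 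For the high part write $\frac{e^{it\sigma}-1}{i\sigma}=\frac{e^{it\sigma}}{i\sigma}-\frac{1}{i\sigma}$: the $-1/(i\sigma)$ term gives a $t$–independent constant times $\eta(t/T)$, with $\big|\int_{|\sigma|>1/T}\frac{\widehat F(\sigma)}{\sigma}\,d\sigma\big|\le\big(\int_{|\sigma|>1/T}\langle\sigma\rangle^{2b'}\sigma^{-2}\,d\sigma\big)^{1/2}\|F\|_{H^{-b'}}\les T^{1/2-b'}\|F\|_{H^{-b'}}$ (the integral converges because $b'<1/2$), and with $\|\eta(\cdot/T)\|_{H^b}\les T^{1/2-b}$ the product is $\les T^{1-b-b'}\|F\|_{H^{-b'}}$; for the $\frac{e^{it\sigma}}{i\sigma}$ term, letting $G$ have $\widehat G(\sigma)=\frac{1}{i\sigma}\widehat F(\sigma)\mathbf{1}_{|\sigma|>1/T}$, on $|\sigma|>1/T\ge1$ one has $\langle\sigma\rangle^{2b}\sigma^{-2}\sim\langle\sigma\rangle^{2b+2b'-2}\langle\sigma\rangle^{-2b'}$ with $2b+2b'-2<0$, hence $\|G\|_{H^b}^2\les T^{2-2b-2b'}\|F\|_{H^{-b'}}^2$ and similarly $\|G\|_{L^2}\les T^{1-b'}\|F\|_{H^{-b'}}$; then the elementary bound $\|\eta(\cdot/T)G\|_{H^b}\les\|G\|_{H^b}+T^{-b}\|G\|_{L^2}$ (for $b\ge0$, $T\le1$, from $\langle\tau\rangle^b\les\langle\sigma\rangle^b+\langle\tau-\sigma\rangle^b$ together with $\|\widehat\eta\|_{L^1}\sim1$ and $\|\langle\cdot\rangle^b\widehat{\eta(\cdot/T)}\|_{L^1}\les T^{-b}$) gives $\les T^{1-b-b'}\|F\|_{H^{-b'}}$. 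Summing the three contributions proves the reduced estimate.

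\emph{Main obstacle.} The two places where the hypotheses genuinely enter, and where I would be careful, are: (i) the low–frequency piece, where localization to $|\sigma|\les 1/T$ does not scale cleanly, which is what forces the Taylor–series expansion; and (ii) the high–frequency piece, where the precise gain $T^{1-b-b'}$ appears only because $b+b'<1$ (so $\langle\sigma\rangle^{b+b'-1}\le T^{1-b-b'}$ on $|\sigma|>1/T$) and because $b'<1/2$ (so $\int_{|\sigma|>1/T}\langle\sigma\rangle^{2b'}\sigma^{-2}\,d\sigma\sim T^{1-2b'}$ rather than a constant). Everything else — the $H^b$ scaling identities, the $H^b$ product bound, the passage from the restricted norm to a near-extremal extension — is routine, and no property of the symbol $|n|^{2\alpha}$ beyond unitarity of $e^{-it(-\Delta)^\alpha}$ on $L^2_x$ is used.
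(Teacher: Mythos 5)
Your proof is correct. The paper gives no proof of this lemma at all --- it is quoted with a citation to Ginibre's survey --- and your argument (near-optimal extension of $f$, a cutoff $\eta(t/T)$, conjugation by the unitary group $e^{it(-\Delta)^\alpha}$ to reduce to a scalar estimate in $t$, then splitting the Duhamel integral at time frequency $|\sigma|\sim 1/T$ with a Taylor expansion for the low part and the $\frac{e^{it\sigma}}{i\sigma}-\frac{1}{i\sigma}$ splitting for the high part) is precisely the standard proof from that reference, with the hypotheses $b'<\tfrac12$ and $b+b'<1$ entering exactly where you say they do. The only caveat is that your scaling and product estimates implicitly use $0\le b<1$, slightly narrower than the hypotheses as literally printed, but this is the intended scope and the regime in which the paper applies the lemma ($b>\tfrac12$ close to $\tfrac12$, $b'<\tfrac12$).
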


  \begin{prop}\label{[local]}
  
  Let $\alpha\in(\frac12,1)$ and $s>\frac{1-\alpha}{2}$, then for $b>1/2$ we have,
  $$\big\| \rho(u)+R(u) \big\|_{X^{s,-b'}}\les \|u\|_{X^{s,b}}^3,$$
  provided that $b'<\frac12$ is sufficiently close to $\frac12$. Moreover,  for $s>\frac12$  we can take $b^{\prime}=0$.
  
  \end{prop}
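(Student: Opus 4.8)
The plan is to estimate the two pieces $\rho(u)$ and $R(u)$ separately on the Fourier side, using the standard $X^{s,b}$ duality that reduces the claim to a weighted $\ell^2$/$L^2$ sum estimate. Writing $f(n,\tau) = \langle \tau - |n|^{2\alpha}\rangle^b \langle n\rangle^s \widehat u(n,\tau)$, one has $\|u\|_{X^{s,b}} = \|f\|_{L^2_\tau \ell^2_n}$, and by duality against a test function $g(n,\tau)$ with $\|g\|_{L^2_\tau\ell^2_n}=1$, the desired bound $\|\rho(u)+R(u)\|_{X^{s,-b'}}\lesssim \|u\|_{X^{s,b}}^3$ becomes
\begin{equation*}
\Big| \sum_{*} \int \frac{\langle n\rangle^s\, \overline{g(n,\tau)}\, f(n_1,\tau_1)\overline{f(n_2,\tau_2)}f(n_3,\tau_3)}{\langle n\rangle^{s}\langle n_1\rangle^s\langle n_2\rangle^s\langle n_3\rangle^s\,\langle \tau-|n|^{2\alpha}\rangle^{b'}\prod_i\langle \tau_i - (-1)^{i+1}|n_i|^{2\alpha}\rangle^{b}}\Big| \lesssim 1,
\end{equation*}
where $*$ denotes $n = n_1 - n_2 + n_3$, $\tau = \tau_1 - \tau_2 + \tau_3$, together with the resonance restrictions $n_1\neq n$, $n_2\neq n_1$ for the $R(u)$ term (and $n_1=n_2=n_3=n$ for the diagonal $\rho(u)$ term, which is trivial: it is $\lesssim \|u\|_{X^{s,b}}^3$ by Sobolev/Hölder in one variable since $s>0$). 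So the real work is the $R(u)$ estimate.

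For $R(u)$ I would first perform the $\tau$-integrations: after relabeling $n = n_4$ (so $n_1 - n_2 + n_3 - n_4 = 0$), integrate in $\tau_1,\tau_2,\tau_3$ using the second inequality of Lemma~\ref{lem:sums} to collapse the four modulation weights against the constraint $\tau_1-\tau_2+\tau_3-\tau_4 = 0$; this produces a gain of $\langle M\rangle^{-(2b-1)^-}$ where $M = |n_1|^{2\alpha} - |n_2|^{2\alpha} + |n_3|^{2\alpha} - |n_4|^{2\alpha}|$ is exactly the quantity controlled below by Lemma~\ref{freq_est} after the change of variables $n_1 = n+j$, $n_2 = n+j+k$, $n_3 = n+k$, $n_4 = n$, namely $M \gtrsim |k||j|(|k|+|j|+|n|)^{-(2-2\alpha)}$ on the non-resonant set $kj\neq 0$. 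One then reduces to a purely spatial trilinear sum in $n,j,k$ with weight $\langle M\rangle^{-(2b-1)^-}$ times the $\langle\cdot\rangle^{-s}$ factors, and applies Cauchy–Schwarz in the standard way (pairing three of the $f$-factors into $\|u\|^3$ after Plancherel, leaving a supremal sum in the remaining variable) — this is structurally the same computation as in the proof of Theorem~\ref{str}, except the modulation gain $(2b-1)$ can be taken as close to $0$ as we like (since $b$ is close to $1/2$), which is why we only need $s > \frac{1-\alpha}{2}$ rather than $\frac{1-\alpha}{4}$: the weaker dispersive gain must be compensated by an extra $s$. Splitting into the regions $|kj|\lesssim |n|^{2-2\alpha}$ (where $M$ is effectively $\gtrsim 1$, so no gain, handled by brute Sobolev) and $|kj| \gtrsim |n|^{2-2\alpha}$, and within the latter into $|k|\approx|n|$, $|k|\ll|n|$, $|k|\gg|n|$ exactly as in Section~3, each sub-sum is seen to converge provided $s > \frac{1-\alpha}{2}$ and $b-\frac12$ is small enough.

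For the final sentence — that when $s>\frac12$ one may take $b'=0$ — the point is that in this regime one does not need any modulation gain at all: by the Banach algebra property of $H^s(\T)$ for $s>1/2$ one has $\|\rho(u)+R(u)\|_{H^s} \lesssim \||u|^2u\|_{H^s} \lesssim \|u\|_{H^s}^3$ pointwise in $t$, and since $X^{s,b}_T\hookrightarrow C([0,T];H^s)$ for $b>1/2$, the $X^{s,0}$ norm of the cubic term is controlled by $\||u|^2u\|_{L^2_t H^s_x}\lesssim T^{1/2}\|u\|_{L^\infty_t H^s_x}^3 \lesssim T^{1/2}\|u\|_{X^{s,b}_T}^3$; combined with the $b'=0$ case of Lemma~2 ($b+b'<1$ and $b'<1/2$ both hold) this yields the $T^{1-b}$-gain needed for the $\|u_0\|_{H^s}^{-4-}$ local existence time claimed in Theorem~\ref{thm:LWP}.

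The main obstacle is the $R(u)$ estimate in Region 3, $|k|\gg|n|$ (equivalently one output/input frequency dominates), where the available modulation gain $(2b-1)^-$ is tiny and the dispersive factor $(|k|+|j|+|n|)^{2-2\alpha}$ in the denominator of $M^{-1}$ actively hurts; there one must use the cancellation $\langle m+j\rangle\langle j\rangle \gtrsim \langle m\rangle$ to redistribute Sobolev weights and then sum carefully in $j$ before $k$, and it is precisely the threshold of this sub-case (together with the $|kj|\gtrsim|n|^{2-2\alpha}$ region) that forces $s>\frac{1-\alpha}{2}$. A secondary bookkeeping issue is tracking that the modulation-weight exponent after Lemma~\ref{lem:sums} is $(2b-1)$ rather than $2b$, so that the needed inequality $b+b'<1$ in Lemma~2 is compatible with taking $b'$ close to $1/2$ from below; this is why the statement asks for $b$ "sufficiently close to $1/2$."
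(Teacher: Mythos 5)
Your overall architecture matches the paper's: duality, reduction to a weighted sum over the convolution constraint, the change of variables $k_1=n+j$, $k_2=n+j+k$, $k_3=n+k$, Lemma~\ref{freq_est} for the resonance function, the split into $|kj|\lesssim |n|^{2-2\alpha}$ versus its complement, and the three frequency regions. The genuine problem is your accounting of the modulation gain. In the paper's scheme one first applies Cauchy--Schwarz against the dual function, so the relevant weights appear \emph{squared}; the $\tau$-integrations are then of the form $\int \langle\tau_i-A\rangle^{-2b}\langle\tau_i-B\rangle^{-2b}\,d\tau_i$ with $2b>1$, and two applications of Lemma~\ref{lem:sums} followed by a supremum over $\tau$ against the remaining factor $\langle\tau-n^{2\alpha}\rangle^{-2b'}$ leave
$$
\sup_n\ \sum_{k_1-k_2+k_3=n}\frac{\langle n\rangle^{2s}}{\langle k_1\rangle^{2s}\langle k_2\rangle^{2s}\langle k_3\rangle^{2s}\,\langle M\rangle^{2b'}},
\qquad M=k_1^{2\alpha}-k_2^{2\alpha}+k_3^{2\alpha}-n^{2\alpha},
$$
i.e.\ a gain of $\langle M\rangle^{-2b'}$ with $2b'=1^-$, essentially a full power of the resonance function --- not the gain $\langle M\rangle^{-(2b-1)^-}\approx\langle M\rangle^{-0^+}$ you claim. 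This is not a cosmetic discrepancy. In the non-resonant region $|kj|\gtrsim|n|^{2-2\alpha}$ the sums carry the factor $|kj|^{-2b'}=|kj|^{-1+}$, and it is precisely this factor that makes the $j$- and $k$-summations close via Lemma~\ref{lem:sums}; with your weight $|kj|^{-0^+}$ the $j$-summation against $\langle n+j\rangle^{-2s}$ already forces $2s>1$, so your argument as written only closes for $s>\frac12$ and never reaches the claimed threshold $s>\frac{1-\alpha}{2}$.

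Consequently your explanation of why the threshold is $\frac{1-\alpha}{2}$ rather than the Strichartz exponent $\frac{1-\alpha}{4}$ (``weaker dispersive gain compensated by extra $s$'') is also off: the loss comes from the structure of the Cauchy--Schwarzed sum (a supremum in $n$ with $\langle n\rangle^{2s}$ in the numerator and only one $\ell^2$ factor left to absorb), and the binding cases are the near-resonant set $0<|kj|\lesssim|n|^{2-2\alpha}$, where $M\lesssim1$ and no modulation gain is available at all (yielding $\langle n\rangle^{2-2\alpha-4s}\log\langle n\rangle$), together with the region $|k|\gg|n|$ computed with the full weight $|kj|^{-1+}$. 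Your treatment of the $s>\frac12$, $b'=0$ case via the algebra property is fine and equivalent to the paper's remark that the sum is bounded with no modulation weight when $2s>1$, and your observation that $b$ must be kept close to $\frac12$ so that the Duhamel lemma applies with $b+b'<1$ is correct, though the reasoning you attach to it (the ``$(2b-1)$ versus $2b$'' bookkeeping) is again based on the same misidentification of where the modulation weights enter.
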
 
\vskip 0.05in
\noindent
As we remarked in the introduction, in the case that $s>\frac12$, the condition $b^{\prime}=0$ implies the existence of the local solution in $[0,\delta]$ as long as $\delta^{\frac12-}\|u_{0}\|_{H^s(\T)}^2\sim 1$. This bound although sub-optimal, it is necessary for the proof of the global well-posedness below the energy space that we establish in section 6.  
\begin{proof} We present the proof for $R(u)$. The proof for $\rho(u)$ is easier and in what follows it corresponds to the terms given by $j=k=0$.

First note that $$  \big\|R(u)\big\|_{X^{s,-b'}}= \Big\|\int_{\tau_1-\tau_2+\tau_3=\tau} \sum_{k_1-k_2+k_3=n\atop{k_1\neq n , k_2}}\frac{\widehat{u}(\tau_1,k_1)\overline{\widehat{u}(\tau_2,k_2 )}\widehat{u}(\tau_3,k_3)\langle n \rangle^{s}}{{\langle \tau-n^{2\alpha} \rangle^{b'}}}\Big\|_{L^2_{\tau}l^2_{n}},
$$
By a duality argument and denoting $|\widehat{u}(\tau,n)|\langle n \rangle^{s}\langle \tau-n^{2\alpha} \rangle^{b}=v(\tau,n)$, we get 
 \begin{align*}
   \big\|R(u)\big\|_{X^{s,-b'}}&\leq \sup_{\|g\|_{L^2_\tau l^2_n}=1} \int_{\tau_1-\tau_2+\tau_3-\tau =0} \sum_{ k_1-k_2+k_3-n=0\atop{k_1\neq n , k_2}} \frac{\langle n \rangle^{s}v(\tau_1,k_1)v(\tau_2,k_2)v(\tau_3,k_3)g(\tau ,n)}{\langle k_1 \rangle^{s}\langle k_2 \rangle^{s}\langle k_3 \rangle^{s}\langle \tau -n^{2\alpha} \rangle^{b'}} \\
   & \qquad\qquad\qquad\times \frac{1}{\langle\tau_1-k_1^{2\alpha}\rangle^b\langle\tau_2-k_2^{2\alpha}\rangle^b\langle\tau_3-k_3^{2\alpha}\rangle^b}, 
\end{align*}
and thus, by Cauchy-Schwarz and then integrating in $\tau$ variables as in \cite{erdtzi1}, we have
 \begin{align*} 
   \big\|R(u)\big\|^2_{X^{s,-b'}}&\leq  \|v\|_{L^2_{\tau}l^2_n}^6\sup_{\tau,n}\int_{\tau_1-\tau_2+\tau_3=\tau }\sum_{ k_1-k_2+k_3=n \atop{k_1\neq n , k_2}}\frac{\langle n \rangle^{2s}}{\langle k_1 \rangle^{2s}\langle k_2 \rangle^{2s}\langle k_3 \rangle^{2s}\langle \tau-n^{2\alpha} \rangle^{2b'}} \\
   & \qquad\qquad\qquad\qquad\times \frac{1}{\langle\tau_1-k_1^{2\alpha}\rangle^{2b}\langle\tau_2-k_2^{2\alpha}\rangle^{2b}\langle\tau_3-k_3^{2\alpha}\rangle^{2b}}. \\
   &\les \|u\|_{X^{s,b}}^6 \sup_n \sum_{k_1-k_2+k_3=n\atop{k_1\neq n , k_2}}\frac{\langle n \rangle^{2s}}{\langle k_1 \rangle^{2s}\langle k_2 \rangle^{2s}\langle k_3 \rangle^{2s} \langle k_1^{2\alpha}-k_2^{2\alpha}+k_3^{2\alpha}-n^{2\alpha} \rangle^{2b'}}. 
  \end{align*}
Hence, we need to show that 

$$M_n=\sum_{k_1-k_2+k_3=n\atop{k_1\neq n , k_2}}\frac{\langle n \rangle^{2s}}{\langle k_1 \rangle^{2s}\langle k_2 \rangle^{2s}\langle k_3 \rangle^{2s}\langle k_1^{2\alpha}-k_2^{2\alpha}+k_3^{2\alpha}-n^{2\alpha} \rangle^{2b'}},$$
is bounded in $n$.
Renaming the variables as $k_1=n+j$, $k_2=n+k+j$, $k_3=n+k$,  and using Lemma~\ref{freq_est}, we get
\begin{align*} M_n &\les \sum_{j,k\neq 0}\frac{\langle n \rangle^{2s}}{\langle n+j \rangle^{2s}\langle n+k+j \rangle^{2s}\langle n+k \rangle^{2s} \max\big(1,\frac{|kj|^{2b'}}{(|k|+|j|+|n|)^{4(1-\alpha)b'}}\big)}\\
&:= I+II
\end{align*}
where $I$ contains the terms with $|kj|\ll(|k|+|j|+|n|)^{2-2\alpha}$ and $II$ contains the remaining terms. Here we note that $M_n$ is bounded in $n$ for $b'=0$ in the case $s>\frac12$. From now on we consider the range $\frac{1-\alpha}2<s\leq \frac12$, and take $b'=\frac12-$. To estimate $I$, as in the proof of Theorem~\ref{str}, we write
\begin{align*}
I\les \sum_{0< |kj|\les |n|^{2-2\alpha}} \frac{\langle n \rangle^{2s}}{\langle n+k \rangle^{2s}\langle n+k+j \rangle^{2s}\langle n+j \rangle^{2s} }\les \la n\ra^{2-2\alpha-4s}\log(\la n\ra),
\end{align*}
which is bounded provided that $s>\frac{1-\alpha}{2}$. Similarly,
\begin{align*}
II&\les \sum_{ |kj|\gtrsim |n|^{2-2\alpha}} \frac{\langle n \rangle^{2s} (|k|+|j|+|n|)^{2(1-\alpha)}}{\langle n+k \rangle^{2s}\langle n+k+j \rangle^{2s}\langle n+j \rangle^{2s} |kj|^{1-} } \\
&\les \sum_{ |kj|\gtrsim |n|^{2-2\alpha}\atop{|k|\geq|j|}} \frac{\langle n \rangle^{2s} (|k|+|n|)^{2(1-\alpha)}}{\langle n+k \rangle^{2s}\langle n+k+j \rangle^{2s}\langle n+j \rangle^{2s} |kj|^{1-} }. 
\end{align*}
Second line follows from the $kj$ symmetry of the sum. To estimate the sum we consider three regions:

\noindent
Region 1. $|k|\gg |n|$. The sum is 
  
  \begin{align*} 
  \les   \sum_{|k|\geq|j|\atop |k|\gg |n|}\frac{\langle n \rangle^{2s}|k|^{2(1-\alpha) -2s-1+}}{\langle n+j \rangle^{2s}\langle n+k+j \rangle^{2s}|j|^{1-}}.
  \end{align*}
  Note that for $\frac12 \geq s>\frac{1-\alpha}2$, we can bound it by
   \begin{align*} 
 & \les   \sum_{|k|\geq|j|\atop |k|\gg |n|}\frac{\langle n \rangle^{2s}|k|^{2(1-\alpha) -4s +}}{\langle n+j \rangle^{2s}|k|^{1-2s+}\langle n+k+j \rangle^{2s}|j|^{1-}} \\
  & \les  \sum_{|k|\geq|j|\atop |k|\gg |n|}\frac{\langle n \rangle^{2(1-\alpha)-2s+} }{\langle n+j \rangle^{2s}|k|^{1-2s+}\langle n+k+j \rangle^{2s}|j|^{1-}} \\
  &\les \sum_j \frac{\langle n \rangle^{2(1-\alpha)-2s+} }{\langle n+j \rangle^{2s} |j|^{1-}} \les \langle n \rangle^{2(1-\alpha)-4s+}
  \end{align*}
  which is bounded in $n$. In the $k$ and $j$ sums we used Lemma~\ref{lem:sums}.

\noindent
Region 2. $|k|\approx |n|$. In this region we have the bound
  
  \begin{align*} &\les   \sum_{|k|\geq|j|\atop |k|\approx |n|}\frac{\langle n \rangle^{2s+1-2\alpha+}}{\langle n+j \rangle^{2s}\langle n+k \rangle^{2s}\langle n+k+j \rangle^{2s}|j|^{1-}} \\
  &\les \sum_j \frac{\langle n \rangle^{2s+1-2\alpha+} A}{\langle n+j \rangle^{2s}|j|^{1-}}, 
  \end{align*}
  where $A=|j|^{1-4s}$ if $4s>1$, $A=|n|^{1-4s}$ if $4s<1$ and $A=\log(|n|)$ if $4s=1$. Then,   by considering these cases separately and   using Lemma~\ref{lem:sums} in the $j$ sums, one obtains boundedness in $n$ for $s>\frac{1-\alpha}2$ and $\alpha>\frac12$.

\noindent
Region 3. $|k|\ll |n|$. We have the bound
  
  \begin{align*}
   \les   \sum_{  |j|\leq |k|\ll |n|}\frac{\langle n \rangle^{-4s+2-2\alpha }}{|kj|^{1-}} \les    \langle n \rangle^{-4s+2-2\alpha+},
  \end{align*}
  which is bounded in $n$. 
  \end{proof}

\section{A smoothing estimate}
We first note that
\be\label{rhobound}
\|\rho(u)\|_{H^{s+c}}=\sqrt{\sum_k |\widehat{u}(k)|^6 \la k\ra^{2s+2c}}\les \|u\|_{H^s}^3,
\ee
for $0\leq c\leq 2s$, which implies that the contribution of $\rho(u)$ to the Duhamel formula is smoother than $u$. 
One can also obtain the same level of smoothing in $X^{s,b}$ spaces: For $c\leq 2s$
$$\| \rho(u)\|_{X^{ s+c ,-\frac12+}}\les \|u\|_{X^{s,\frac12+}}^3.$$ 
To prove the same for the non resonant terms $R(u)$ we have the following proposition:
  \begin{prop}\label{[smoothing]} For $s>\frac{1-\alpha}{2}$ and  $c< \min(\alpha-\frac12,2s+\alpha-1)$ , we have $$\| R(u)\|_{X^{ s+c ,-\frac12+}}\les \|u\|_{X^{s,\frac12+}}^3.$$  
  \end{prop}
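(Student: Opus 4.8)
The plan is to follow exactly the template of Proposition~\ref{[local]}, since the only change is that we now weight the output frequency $n$ by $\la n\ra^{s+c}$ instead of $\la n\ra^s$ and work with $b'=\frac12-$, $b=\frac12+$. By duality, writing $v(\tau,n)=|\widehat u(\tau,n)|\la n\ra^s\la\tau-n^{2\alpha}\ra^b$ and a test function $g\in L^2_\tau\ell^2_n$, applying Cauchy--Schwarz and carrying out the $\tau$-integrations via the second estimate in Lemma~\ref{lem:sums} exactly as before, it suffices to show that
\begin{equation*}
\wt M_n:=\sum_{k_1-k_2+k_3=n\atop k_1\neq n,\,k_2}\frac{\la n\ra^{2s+2c}}{\la k_1\ra^{2s}\la k_2\ra^{2s}\la k_3\ra^{2s}\la k_1^{2\alpha}-k_2^{2\alpha}+k_3^{2\alpha}-n^{2\alpha}\ra^{1-}}
\end{equation*}
is bounded in $n$. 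Renaming $k_1=n+j$, $k_2=n+k+j$, $k_3=n+k$ and invoking Lemma~\ref{freq_est} for the modulation factor, we split $\wt M_n=I+II$ where $I$ collects the terms with $|kj|\ll(|k|+|j|+|n|)^{2-2\alpha}$ (so the modulation weight is $\sim 1$) and $II$ the rest (where the weight is $\sim (|kj|)^{1-}(|k|+|j|+|n|)^{-2(1-\alpha)+}$).

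For the term $I$, as in the earlier proofs the constraint forces $|k|,|j|\ll|n|$ except for finitely many $n$, so $\la n+k\ra,\la n+j\ra,\la n+k+j\ra\sim\la n\ra$ and
\begin{equation*}
I\les \la n\ra^{2c-6s}\sum_{0<|kj|\les|n|^{2-2\alpha}}1\les \la n\ra^{2c-6s+2-2\alpha+},
\end{equation*}
which is bounded precisely when $2c<6s-2+2\alpha$, i.e. $c<3s-1+\alpha$; since we are assuming $c<2s+\alpha-1$ and $s>0$ this is implied. For $II$, by the $k\leftrightarrow j$ symmetry one reduces to $|k|\ge|j|$ and then splits into the three regions $|k|\gg|n|$, $|k|\approx|n|$, $|k|\ll|n|$, estimating the $j$-sum and then the $k$-sum by Lemma~\ref{lem:sums} in each region exactly as in Proposition~\ref{[local]}. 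The extra factor $\la n\ra^{2c}$ is absorbed by the gains in $|k|$ (via $|k|\gtrsim|n|$ in Regions 1 and 2) and by the extra negative powers of $\la n\ra$ already present in Region~3; tracking the exponents one finds boundedness under the two constraints $c<\alpha-\frac12$ (this is the region $|k|\approx|n|$, where no help from $|k|$ beyond $\sim|n|$ is available, so the surplus power must be paid out of the modulation gain $\la n\ra^{1-2\alpha}$ against $\la n\ra^{2c}$) and $c<2s+\alpha-1$ (Regions~1 and~3).

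The main obstacle, as in the local theory, is Region~2, $|k|\approx|n|$: there the three shifted brackets can all simultaneously be small compared with $\la n\ra$, so one genuinely loses up to $\la n\ra^{1-4s}$ (or $\log\la n\ra$ when $4s=1$) from the $\la k_1\ra\la k_2\ra\la k_3\ra$ denominators, and the only compensating decay comes from the modulation factor $\la n\ra^{1-2\alpha+}$ supplied by Lemma~\ref{freq_est}. Balancing $\la n\ra^{2s+2c}\cdot\la n\ra^{1-2\alpha}\cdot\la n\ra^{-4s}$ (in the case $4s\le1$, after summing $j$) against the remaining $j$-sum forces $c<\alpha-\frac12$; the sub-case analysis $4s\gtrless1$ must be done carefully but introduces nothing new beyond what appears in Proposition~\ref{[local]}. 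All other regions are strictly better, so the final constraint is $c<\min(\alpha-\frac12,\,2s+\alpha-1)$, as claimed, and a standard iteration/continuity argument is not needed here since the statement is just the multilinear bound itself.
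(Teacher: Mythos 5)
Your strategy coincides with the paper's: reduce by duality and Lemma~\ref{lem:sums} to the boundedness in $n$ of the weighted sum $M(n)$, invoke Lemma~\ref{freq_est} for the modulation factor, split into the near-resonant terms $0<|kj|\les|n|^{2-2\alpha}$ and the rest, and handle the latter in the three regions $|k|\gg|n|$, $|k|\approx|n|$, $|k|\ll|n|$ after symmetrizing to $|k|\geq|j|$. Your identification of where the two constraints in $\min(\alpha-\frac12,2s+\alpha-1)$ arise is also essentially correct.

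There is, however, an arithmetic slip in your treatment of $I$ that changes the logic. Since $|k|,|j|\ll|n|$ there, each summand is $\la n\ra^{2s+2c}/\la n\ra^{6s}=\la n\ra^{2c-4s}$, not $\la n\ra^{2c-6s}$: you dropped the $\la n\ra^{2s}$ from the numerator. Hence $I\les \la n\ra^{2c-4s+2-2\alpha+}$, and the near-resonant region genuinely imposes $c<2s+\alpha-1$; it is one of the two binding constraints of the statement, not the strictly weaker condition $c<3s+\alpha-1$ that you claim is automatically implied. The proposition still follows because $c<2s+\alpha-1$ is assumed, but the assertion that region $I$ is harmless is false and should be corrected. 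Separately, your Region~II discussion is only a sketch; to be complete you must carry out the exponent bookkeeping the paper does (the cases $s>\frac12$ and $\frac{1-\alpha}2<s\le\frac12$, with the sub-cases $4s\gtrless 1$ when $|k|\approx|n|$), but those computations do run exactly as in Proposition~\ref{[local]} with the extra $\la n\ra^{2c}$, yielding $c<\alpha-\frac12$ from $|k|\approx|n|$ (for $s\ge\frac14$) and $c<2s+\alpha-1$ from the remaining regions, as you indicate.
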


  \begin{proof}
  Repeating the steps in the proof of Proposition~\ref{[local]},  it suffices to prove that
  $$ M(n)= \sum_{kj\neq 0}\frac{\langle n \rangle^{2s+2c}}{\langle n+j \rangle^{2s}\langle n+k \rangle^{2s}\langle n+j+k \rangle^{2s}\langle \frac{|kj|}{(|n|+|k|+|j|)^{2-2\alpha}} \rangle^{1- }} $$
  is bounded in $n$.

  For the   terms with  $0<|kj|\les|n|^{2-2\alpha}$, since $|k|,|j|\ll |n|$, we have the bound
  
  \begin{equation*}
 \les  \sum_{0<|kj|\les|n|^{2-2\alpha}}\langle n \rangle^{-4s+2c}\les \langle n \rangle^{-4s+2c+2-2\alpha}\log(n),
  \end{equation*}
 which is bounded provided that  $c<2 s+\alpha-1$.  
  
  For the remaining terms, we have to consider the cases  $s>1/2$ and $s\leq 1/2$ separately. Again by symmetry in $j$ and $k$, it is enough to consider $|k|\geq|j|$. 
  
\noindent
Case 1. $s>1/2$. As before, we will consider three regions:
  
 Region 1.1. $|k|\gg|n|$. Then we have
  \begin{align*}
  &\les \sum_{|k|\geq|j|>0 \atop |k|\gg|n|}\frac{\langle n \rangle^{2s+2c}|k|^{1-2\alpha-2s+}}{\langle n+j \rangle^{2s}\langle n+k+j \rangle^{2s}|j|^{1-}} \\
  &\les \sum_{j\atop |k|\gg|n|}\frac{\langle n \rangle^{2c+1-2\alpha+}}{\langle n+j \rangle^{2s}\langle n+k+j \rangle^{2s}|j|^{1-}} \\
  &\les  \sum_{j}\frac{\langle n \rangle^{2c+1-2\alpha +}}{\langle n+j \rangle^{2s}\langle j \rangle^{1-}} 
   \les  \langle n \rangle^{2c -2\alpha +}, 
    \end{align*}
which is bounded  for $c< \alpha $. In the forth inequality we used Lemma~\ref{lem:sums}.
   
    Region 1.2. $|k|\approx |n|$. In this region we have,
   
   \begin{align*}
  \les \sum_{|k|\geq|j|>0 \atop |k|\approx |n|}\frac{\langle n \rangle^{2c+2s+1-2\alpha+}}{\langle n+k \rangle^{2s}\langle n+j \rangle^{2s}\langle n+j+k \rangle^{2s}|j|^{1-}}
   \les \sum_{|k|\geq|j|>0 \atop |k|\approx |n|}\frac{\langle n \rangle^{2c +1-2\alpha+}}{\langle n+k \rangle^{2s} |j|^{1-}}
  \les \langle n \rangle^{2c+1-2\alpha+} 
   \end{align*}
   for $c<\alpha- \frac12$.
   
   Region 1.3. $|k|\ll |n|$. We have
   \begin{align*}
   \les  \sum_{|k|\geq|j|>0\atop |k|\ll |n|}\frac{\langle n \rangle^{-4s+2c+2-2\alpha+}}{|kj|^{1-}} \les \langle n \rangle^{2c-4s+2-2\alpha+}, 
   \end{align*}
   which is bounded  
   for $c<2 s+\alpha-1$. This finishes the case $s>1/2$.
   
\noindent
Case 2. $\frac{1-\alpha}2<s\leq 1/2$.  
   
 Region 2.1. $|k|\gg|n|$. As in the proof of Proposition~\ref{[local]}, we have
   
   \begin{eqnarray}
  \les \sum_{|k|\geq|j|>0 \atop |k|\gg |n|}\frac{\langle n \rangle^{2s+2c-4s+2-2\alpha+}}{\langle n+j \rangle^{2s}\langle n+k+j \rangle^{2s} |k|^{1-2s+} |j|^{1-}} \les  \langle n \rangle^{2c-4s+2-2\alpha+} \non
   \end{eqnarray}
   which is bounded for $c<2s+\alpha-1 $.
   
  Region 2.2. $|k|\approx |n|$. In this region we have,
   
   \begin{eqnarray}\non
  \les  \sum_{|k|\geq|j|>0 \atop |k|\approx |n|}\frac{\langle n \rangle^{2s+2c+1-2\alpha+}}{\langle n+j \rangle^{2s}\langle n+k \rangle^{2s}\langle n+k+j \rangle^{2s}|j|^{1-}}\les  \sum_{j}\frac{\langle n \rangle^{2s+2c+1-2\alpha+} A}{\langle n+j \rangle^{2s}|j|^{1-}}, 
   \end{eqnarray}
   where $A=\langle j \rangle^{1-4s}$ for $\frac14\leq s\leq \frac12$, and $A=\langle n \rangle^{1-4s}$ for $0<s<\frac14$. Hence,
   
   \begin{eqnarray}
   & \les &\langle n \rangle^{2c+1-2\alpha+}\qquad\qquad\quad \text{for $s\geq \frac14$},\non\\
   &\les&  \langle n \rangle^{2c-4s+2-2\alpha+}\qquad\qquad \text{for $0<s<\frac14$}\non, 
   \end{eqnarray}
  which is bounded for $c<2 s+\alpha-1$ when $s\in (0,\frac14)$ and $c<\alpha-\frac12$ when $s\geq\frac14$.  
   
 Region 2.3. $|k|\ll |n|$. We have,
   
   \begin{eqnarray}\non
    \les   \sum_{|k|\geq|j|>0\atop |k|\ll |n|}\frac{\langle n \rangle^{2c+2-2\alpha-4s+}}{|kj|^{1-}} 
    \les   \langle n \rangle^{2c+2-2\alpha-4s+} 
   \end{eqnarray}
   which is bounded for $c<2s+\alpha-1$.
   
   Hence, for all $s$, collecting the results we get the proposition. 
  \end{proof}  
This implies that (see \cite{talbot} for more details):
\begin{theorem} For $\alpha\in(\frac12,1)$, $s>\frac{1-\alpha}2$ and $c<\min(2s+\alpha-1,\alpha-\frac12)$ we have 
$$
\|u(t)-e^{it(-\Delta)^\alpha-iPt}u_0\|_{H^{s+c}}\les \|u_0\|_{H^s}^3
$$
for $t<T$, where $T$ is the local existence time.
\end{theorem}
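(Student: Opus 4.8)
The plan is to read off the estimate from the Duhamel representation set up in Section~4, the smoothing bound of Proposition~\ref{[smoothing]} (together with the resonant estimate $\|\rho(u)\|_{X^{s+c,-1/2+}}\les\|u\|_{X^{s,1/2+}}^3$ recorded just after \eqref{rhobound}), and the local bound of Theorem~\ref{thm:LWP}. After the gauge change $u\mapsto e^{iPt}u$ of Section~4 the gauged solution satisfies
$$
u(t)=e^{it(-\Delta)^\alpha}u_0-i\int_0^t e^{i(t-t')(-\Delta)^\alpha}\big(\rho(u)(t')+R(u)(t')\big)\,dt' .
$$
Undoing the gauge multiplies both sides by a unimodular factor $e^{\pm iPt}$, which leaves every $H^\sigma(\T)$ norm unchanged; hence, writing $\Phi(t):=\int_0^t e^{i(t-t')(-\Delta)^\alpha}\big(\rho(u)(t')+R(u)(t')\big)\,dt'$, it suffices to bound $\sup_{0\le t<T}\|\Phi(t)\|_{H^{s+c}}$ by $\|u_0\|_{H^s}^3$.

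Next I would pass to the Bourgain spaces. Since $T$ may be taken $\le 1$, fix $b=\frac12+\eps$ and $b'=\frac12-2\eps$ with $\eps>0$ small, so that $b>\frac12$, $0\le b'<\frac12$ and $b+b'<1$. Using the embedding $X^{s+c,b}_T\hookrightarrow C([0,T];H^{s+c})$ together with the inhomogeneous estimate of \cite{gin} (applied with Sobolev index $s+c$),
$$
\sup_{0\le t<T}\|\Phi(t)\|_{H^{s+c}}\les\|\Phi\|_{X^{s+c,b}_T}\les T^{1-b-b'}\,\big\|\rho(u)+R(u)\big\|_{X^{s+c,-b'}_T} .
$$
For the last factor, Proposition~\ref{[smoothing]} controls the non-resonant part $R(u)$ exactly under the standing hypothesis $c<\min(\alpha-\frac12,\,2s+\alpha-1)$, while the resonant part obeys $\|\rho(u)\|_{X^{s+c,-b'}}\les\|u\|_{X^{s,b}}^3$ by the elementary bound after \eqref{rhobound} (applicable since $s>\frac{1-\alpha}2$ forces $0<2s+\alpha-1<2s$, hence $c<2s$); here one uses the usual passage to and from the restriction norms, noting $\rho,R$ act pointwise in $t$. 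Thus $\big\|\rho(u)+R(u)\big\|_{X^{s+c,-b'}_T}\les\|u\|_{X^{s,b}_T}^3$.

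Finally, Theorem~\ref{thm:LWP} (equivalently Proposition~\ref{[local]}) gives $\|u\|_{X^{s,b}_T}\les\|u_0\|_{H^s}$ on the local interval, and $T^{1-b-b'}\le 1$, so concatenating the two displays yields $\|u(t)-e^{it(-\Delta)^\alpha-iPt}u_0\|_{H^{s+c}}\les\|u_0\|_{H^s}^3$ for $t<T$. I do not expect any genuine obstacle: the only delicate point is the joint choice of $b,b'$, since \cite{gin} requires $0\le b+b'<1$ and $b'<\frac12$, the embedding into $C_tH^{s+c}$ requires $b>\frac12$, and Proposition~\ref{[smoothing]} and the resonant estimate are phrased with $b=\frac12+$, $b'=\frac12-$; the choice $b=\frac12+\eps$, $b'=\frac12-2\eps$ reconciles all of these simultaneously. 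This is precisely the routine concatenation deferred to \cite{talbot}.
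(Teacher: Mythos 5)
Your proposal is correct and takes essentially the same approach as the paper: the theorem is exactly the routine concatenation of Proposition~\ref{[smoothing]}, the resonant bound following \eqref{rhobound}, the inhomogeneous $X^{s,b}$ estimate of \cite{gin}, the embedding $X^{s+c,b}_T\subset C([0,T];H^{s+c})$ for $b>\frac12$, and the local theory bound, which the paper states without detail and defers to \cite{talbot}. Your gauge/unimodular-factor observation and the explicit choice $b=\frac12+\eps$, $b'=\frac12-2\eps$ correctly supply those deferred details.
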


We finish this section by noting that if we define the multilinear versions of $\rho$ and $R$ via
$$
\widehat{\rho(u,v,w)}(k)= \widehat u(k) \overline{\widehat v(k)} \widehat{w}(k),\,\,\,\,\, 
\widehat{R(u,v,w)}(k)=\sum_{k_1\neq k, k_2\neq k_1}  \widehat u(k_1)\overline{\widehat v(k_2)}\widehat w(k-k_1+k_2),
$$
then the assertions of Proposition~\ref{[local]} and Proposition~\ref{[smoothing]} remain  valid.

   \section{Global Well-posedness via High-Low Frequency Decomposition}
   
From the local theory along with energy and mass conservation, the existence of global solutions in $H^\alpha$ follows easily. In this case, one can control the $H^{\alpha}$ norm and apply the local theory with a uniform in time step to reach any time. In this section we use Bourgain's high-low frequency decomposition together with the smoothing estimate from the previous section to obtain global well-posedness for initial data with infinite energy.
 
   \begin{proof}[Proof of Theorem~\ref{thm:GWP}] Fix $s\in(\frac{1}{2}, \alpha)$.
With the change of variable $u(x,t)\to u(x,t) e^{iPt}$ in   equation \eqref{sch}, where $P=\frac{1}{\pi}\|u_0\|_2^2$, we obtain the equation
$$
iu_t+(-\Delta)^\alpha u+|u|^2u-Pu=0,\,\,\,\,\,\, t\in \R,\,\,\,x\in\T,
$$
with initial data in $u_0\in H^s(\T)$.    In what follows, the implicit constants will depend on $\|u_0\|_{H^s}$. 
We fix $N$ large and decompose the equation 
   into two equations, $u=v+w$:
   \begin{equation}\label{eqn:low}
\left\{
\begin{array}{l}
iv_{t}+(-\Delta)^{\alpha} v +|v|^2v-Pv=0,\\
v(x,0)=P_Nu_0(x)\dot{=}\Phi_0, \\
\end{array}
\right.
\end{equation}
  and
  \begin{equation}\label{eqn:high}
\left\{
\begin{array}{l}
iw_{t}+(-\Delta)^{\alpha} w +|v+w|^2(v+w)-Pw-|v|^2v=0,\\
w(x,0)=u_0(x)-\Phi_0\dot{=}\Psi_0, \\
\end{array}
\right.
\end{equation}
where $P_N$ is the projection onto the frequencies $|n|\leq N$. 

First note that $\|\Phi_0\|_{H^\alpha}\les N^{\alpha-s}$. Moreover, by the local existence theory we presented in $H^\alpha$ and $H^s$ levels, noting that $\alpha>s>\frac12$, we have for $\delta\approx N^{-4(\alpha-s)}$
$$
\|v\|_{X^{\alpha,b}_\delta}\les \|\Phi_0\|_{H^\alpha}\les N^{\alpha-s},\,\,\,\,\,\,\,\,\,\|v\|_{X^{s,b}_\delta} \les \|\Phi_0\|_{H^s}\les 1.
$$
Since equation \eqref{eqn:low} has  the same Hamiltonian, we have
$$
H(v(t))=H(\Phi_0)\les N^{2\alpha-2s}
$$
by the Gagliardo-Nirenberg inequality.

Now pick an $s_0<s$ to be determined later. Note that  $\|\Psi_0\|_{H^{s_0}}\les N^{s_0-s}$. The local existence for $w$ equation follows similarly by the multilinear estimates from the previous sections with the same $\delta$ as above (since the norm of $w$ is small). We thus have
$$
\|w\|_{X^{s_0,b}_\delta}\les \|\Psi_0\|_{H^{s_0}}\les N^{s_0-s},\,\,\,\,\,\,\,\,\,\|w\|_{X^{s,b}_\delta} \les \|\Psi_0\|_{H^s}\les 1.
$$

Now using the decomposition \eqref{res_decomp} for the nonlinearity $\mathcal{N}:= |v+w|^2(v+w)-Pw-|v|^2v$ in \eqref{eqn:high} we have (with $u=v+w$)
 \begin{multline*} 
 \mathcal{N} = Pu-Pw-\frac1\pi \|v\|_{L^2}^2 v +\rho(u)-\rho(v)+R(u)-R(v)\\
 =\frac1\pi \big(\|u_0\|_2^2-\|\Phi_0\|_{L^2}^2\big) v +\rho(u)-\rho(v)+R(u)-R(v).
 \end{multline*}
Using the multilinear smoothing estimate and the multilinearity of $\rho$ and $R$, we have
$$
\|\mathcal N\|_{X^{\alpha,-\frac12+}_\delta} \les \big|\|u_0\|_2^2-\|\Phi_0\|_{L^2}^2\big| \|v\|_{X^{\alpha,-\frac12+}_\delta} + \|w\|_{X^{s_0,b}_\delta}^3+ \|w\|_{X^{s_0,b}_\delta} \|v\|_{X^{s_0,b}_\delta}^2,
$$
for $\alpha-s_0< \min(2s_0+\alpha-1,\alpha-\frac12)$, in particular for $s_0>\frac12$.
 
Ignoring the support condition of $\Phi_0$ and $\Psi_0$, we have 
$$
\big|\|u_0\|_2^2-\|\Phi_0\|_{L^2}^2\big|\les \|\Psi_0\|_{L^2}+\|\Psi_0\|_{L^2}^2\les N^{-s}.
$$
Therefore, we obtain
\begin{multline*}
\|\mathcal N\|_{X^{\alpha,-\frac12+}_\delta} \les N^{-s} \delta^{1-} \|v\|_{X^{\alpha,b}_\delta} + \|w\|_{X^{s_0,b}_\delta}^3+ \|w\|_{X^{s_0,b}_\delta} \|v\|_{X^{\alpha,b}_\delta}^2\\
\les N^{-s} \delta^{1-} N^{\alpha-s}+N^{3(s_0-s)}+N^{s_0-s} N^{2(\alpha-s)} \les N^{2\alpha+s_0 -3s}.
\end{multline*}
Taking $t_1=\delta$, we  write 
$$
u(t_1)=w(t_1)+v(t_1)=e^{it_1(-\Delta)^\alpha+iPt_1}\Psi_0+ w_1(t_1)+v(t_1).
$$
By the bound on $\mathcal N$ and Duhamel's formula, we have 
$$
\|w_1(t_1)\|_{H^{\alpha}}\les N^{2\alpha+s_0 -3s}.
$$
We repeat this process by decomposing $u(t_1)=\Phi_{1}+\Psi_1$, where
$$
\Psi_1=e^{it_1(-\Delta)^\alpha+iPt_1}\Psi_0,\,\,\,\,\,\,\,\,\, \Phi_{1}=w_1(t_1)+v(t_1).
$$
Since $e^{it_1(-\Delta)^\alpha+iPt_1}$ is unitary, $\Psi_1$ satisfies all the properties of $\Psi_0$.
To control the $H^\alpha$ norm of $\Phi_1$, we note 
$$
H(\Phi_1)=H(\Phi_1)-H(v(t_1))+H(v(t_1))=H(w_1(t_1)+v(t_1))-H(v(t_1))+H(\Phi_0),
$$
where the second equality follows from the conservation of the Hamiltonian.

Note that
\begin{multline*}
\big|H(f+g)-H(f)\big| \les \big|\| |\nabla|^\alpha (f+g)\|_2^2 -\| |\nabla|^\alpha f \|_2^2\big| +\int \big| |f+g|^4-|f|^4 \big|\\
\les  \|   g \|_{H^\alpha}^2+ \|   g\|_{H^\alpha}\|   f \|_{H^\alpha} + \int |g| \big(|f|^3+|g|^3\big)\\
\les  \|   g \|_{H^\alpha}^2+ \|   g\|_{H^\alpha}\|   f \|_{H^\alpha} +\|g\|_{H^{\frac14+}}^4 + \|g\|_{H^{\frac14+}} \|f\|_{H^{\frac14+}}^3\\
\les  \|   g \|_{H^\alpha}^2+ \|   g\|_{H^\alpha}\|   f \|_{H^\alpha} +\|g\|_{H^{\alpha}}^4 + \|g\|_{H^{\alpha}} \|f\|_{H^{\alpha}}^3.
\end{multline*} 
Using this for $f=v(t_1)$ and $g=w_1(t_1)$, we obtain
\begin{align*}
  H(w_1(t_1)+v(t_1))-H(v(t_1)) \les  N^{2\alpha+s_0-3s}N^{3(\alpha-s)}=N^{5\alpha+s_0-6s}.
\end{align*}
To reach time $T$ we have to iterate this process $\frac{T}{\delta}$ times. To bound the Hamiltonian at time $T$ by a constant multiple of the initial value, we need
$$
N^{5\alpha+s_0-6s} \frac{T}{\delta}=TN^{9\alpha+s_0-10s}
$$ 
to be $\les N^{2\alpha-2s}$. This holds for  $s>\frac{7\alpha}8+\frac1{16}$  by taking $s_0=\frac12+$ and $N$ sufficiently large.

 The calculation above can be improved by interpolating between $H^\alpha$ and $L^2$ to bound the $H^{\frac14+}$ norms. For example, by Duhamel's formula and Minkowski inequality, we have
$$
\|w_1(t_1)\|_{L^2}\les \int_0^{t_1} \|\mathcal N\|_{L^2} dt.   
$$
The worst term in $\mathcal N$ is of the form $|v^2w|$ which can be bounded as follows
$$
 \delta^{\frac12} \|v\|_{L^4_tL^4_x}^2   \|w\|_{L^\infty_tL^\infty_x} \les  \delta^{\frac12} \|v\|_{L^4_tL^4_x}^2  \|w\|_{X^{s_0,b}_\delta} \les \delta H(v)^{\frac12}  \|w\|_{X^{s_0,b}_\delta} 
 \les \delta N^{\alpha+s_0-2s}.
$$
After, $\frac{T}{\delta}$ steps, the $L^2$ norm remains $\les N^{\alpha+s_0-2s}\les 1$, for $s>\frac\alpha{2}+\frac14$.
Therefore the $L^2$ norm of the low frequency part also remains $\les 1$.

Using this in the bound for the Hamiltonian, we get
\begin{multline*}
  H(w_1(t_1)+v(t_1))-H(v(t_1)) \les  N^{2\alpha+s_0-3s}N^{ \alpha-s }+  N^{(1-\frac1{4\alpha})(\alpha+s_0-2s)} N^{\frac{2\alpha+s_0-3s}{4\alpha}} N^{\frac{3(\alpha-s)}{4\alpha} }N^+\\
  \les N^{3\alpha+\frac12-4s+}+N^{\alpha+\frac32-2s-\frac{s}{\alpha}+}\les N^{3\alpha+\frac12-4s+}.
\end{multline*}
After $\frac{T}{\delta}$ steps we get the bound $  TN^{7\alpha+\frac12-8s+}$. This term is less than similar the initial energy of the high frequency part which is of order $ N^{2\alpha-2s}$ for $s>\frac{5\alpha}6+\frac1{12}.$ We can then iterate our result to reach any time $T$ by sending $N$ to infinity.
\end{proof}

 \end{document}